\documentclass[hidelinks,onefignum,onetabnum]{siamart251216}

\usepackage{lipsum}
\usepackage{amsfonts}
\usepackage{graphicx}
\usepackage{epstopdf}
\usepackage{algorithmic}
\usepackage{enumerate}
\usepackage{amssymb}
\usepackage[caption=false]{subfig}
\ifpdf
  \DeclareGraphicsExtensions{.eps,.pdf,.png,.jpg}
\else
  \DeclareGraphicsExtensions{.eps}
\fi

\newsiamremark{remark}{Remark}
\newsiamremark{hypothesis}{Hypothesis}
\crefname{hypothesis}{Hypothesis}{Hypotheses}
\newsiamthm{claim}{Claim}
\newsiamremark{fact}{Fact}
\crefname{fact}{Fact}{Facts}
\newsiamremark{example}{Example}
\crefname{example}{Example}{Examples}

\headers{Nonequilibrium Transient Centers in Continuous-Time Systems}{A. Pasion, and F. Magpantay}

\title{Nonequilibrium Transient Centers in Continuous-Time Systems\thanks{Submitted to the editors DATE.
\funding{This work was supported by the Natural Sciences and Engineering Research Council of Canada (NSERC) Discovery Grant Program.}}}

\author{
Anthony Pasion\thanks{Department of Mathematics and Statistics, Queen's University, Kingston, Ontario, Canada, and Department of Mathematics and Computer Science, University of the Philippines Baguio, Baguio City, Philippines (\email{ampasion@up.edu.ph}).}
\and
Felicia Magpantay\thanks{Department of Mathematics and Statistics, Queen's University, Kingston, Ontario, Canada
  (\email{felicia.magpantay@queensu.ca}).}
}

\usepackage{amsopn}

\ifpdf
\hypersetup{
  pdftitle={Nonequilibrium Transient Centers in Continuous-Time Systems},
  pdfauthor={A. Pasion, and F. Magpantay}
}
\fi

\begin{document}

\maketitle

\begin{abstract}
Analysis of mathematical models in ecology and epidemiology often emphasizes asymptotic dynamics, such as stable equilibria, periodic orbits, and attractors. However, many biologically relevant trajectories exhibit long transient dynamics in which a relevant observable evolves slowly for an extended period before a comparatively rapid transition occurs. On finite time scales relevant to data collection, management, and intervention, such transient behavior can be more informative than the eventual asymptotic outcome. In this work, we study long-lasting and slowly varying transient dynamics in continuous-time systems through the framework of nonequilibrium transient centers. These centers are points in the state space near which the observable can remain nearly constant for a long time, even when the full state is not near an equilibrium. We develop finite-time escape criteria for identifying such centers and their reachable counterparts. These criteria provide practical conditions under which nearby trajectories can remain in a slow regime for a long period of time before eventually leaving it. We apply the theory to an epidemic model with delay of stability loss. In this model, trajectories may remain close to the disease-free manifold for a long time before the infectious population resurges. The framework developed here describes this behavior as slow evolution near a nonequilibrium transient center followed by finite-time escape from the slow regime.
\end{abstract}

\begin{keywords}
long transient dynamics, differential equations, transient points, transient centers, delay of stability loss
\end{keywords}

\begin{MSCcodes}
34A34, 34C60, 37C10, 37N25, 92D30
\end{MSCcodes}

\section{Introduction}

Mathematical models in ecology and epidemiology are often analyzed through their asymptotic dynamics, such as stable equilibria, periodic orbits, attractors, and their basins of attraction. This type of analysis is fundamental, but it does not always describe the behavior that is most relevant on ecological, epidemiological, or management time scales. In many systems, trajectories may spend a long time in a slowly varying regime before undergoing a rapid transition to a different dynamical regime. Such long-lasting transient behavior can be decisive for intervention and interpretation, especially when the eventual asymptotic state is reached only after a time scale much longer than the period over which data are collected, or decisions are made.

The importance of transient dynamics in ecology has been emphasized in a series of works beginning with Hastings and collaborators \cite{hastings2004transients,hastings2018transient}. These works showed that ecological systems may remain far from their asymptotic behavior for many generations, and that a sudden change in the observed dynamics need not be caused by an immediate change in environmental conditions. Long transients can arise from a variety of mechanisms, including saddle crawl-by dynamics, ghost attractors, chaotic saddles, chaotic repellers, slow-fast structure, stochastic effects, and high-dimensional or infinite-dimensional effects \cite{hastings2018transient,morozov2020long,morozov2024long}. This has important implications for ecosystem management, since a system that appears stable over a finite observation window may in fact be undergoing a long transient and eventually shift even in the absence of further change in external conditions \cite{francis2021management,sardanyes2024identifying}.

Recent work has continued to develop both theoretical and computational approaches to long transients. Morozov and collaborators studied long transients in discontinuous discrete-time population models and in models with density-dependent dispersal, showing that transient-related regime shifts and delayed extinctions can occur over very long time scales \cite{morozov2023long,morozov2024regime}. In slow-fast predator-prey systems, normal-form reductions, invariant-manifold geometry, and averaging methods have been used to analyze long oscillatory transients and to identify early warning signs of regime shifts and population collapse \cite{sadhu2022analysis,sadhu2024analysis}. Other approaches quantify transient duration through data-driven prediction methods \cite{heggerud2024model}, reaching times \cite{kittel2017timing}, and first passage times \cite{poulsen2024first}. The influence of stochasticity on ecological transients has also been studied systematically, as noise may shorten, prolong, create, or qualitatively alter transient behavior \cite{hastings2021effects}. In particular, stochasticity may shorten transients generated by ghost attractors or produce apparent long transients through noise-induced transitions between alternative states \cite{reimer2021noise}. Long transients have also been observed in spatially extended ecological models, where quasi-stable spatial patterns may persist for long periods before small perturbations produce an abrupt transition to a different pattern \cite{xu2025mechanisms}. Ghost attractors and their structures, including ghost channels and ghost cycles, have also been developed as geometric objects that organize transient dynamics \cite{koch2024ghost}. Together, these studies show that long transients are not isolated curiosities, but rather a recurring feature of nonlinear biological systems.

A complementary observable-based framework was introduced by A. Liu and Magpantay \cite{liu2022quantification} and further developed by A. Liu et al. \cite{liu2023framework}. Instead of measuring transience by the distance of a trajectory from an attractor, this framework begins with an observable $v$, such as the infectious population in an epidemic model or the prey density in a predator-prey model. The central quantity is the rate at which this observable changes along solutions. A trajectory is considered slow relative to $v$ when this rate of change is small.

This perspective is useful in applications where only one component, or one function of the full state, is biologically relevant or observable. For example, during the honeymoon period of an infectious disease, the number of infectious individuals may remain close to zero for a long time, even though other variables such as the susceptible population continue to change \cite{kharazian2020honeymoon,mclean1988measles}. Thus, the system may appear nearly stationary from the point of view of the observable, while the full state is still evolving.

Within this framework, transient points are initial conditions whose observable changes slowly for a prescribed long time but eventually leaves the slow regime. The requirement that the trajectory eventually exits the slow regime is essential because it distinguishes long transient dynamics from dynamics that remain slow forever. Transient points cluster close to points called transient centers. These are points whose neighborhoods contain initial conditions that generate arbitrarily long and arbitrarily slow transient behavior. A transient center is therefore not a transient point, but rather it organizes nearby trajectories that display long-lasting slow evolution before a finite-time transition.

The reachable version of the theory strengthens this idea by requiring that the slow regime can be approached from outside. In continuous-time systems, this is expressed by requiring finite-time escape from the slow regime both forward and backward in time \cite{liu2022quantification,liu2023framework}. This distinction is important in applications because it identifies slow regimes that trajectories can actually enter, rather than slow regimes that occur only for specially chosen initial conditions.

The theory of transient centers has been developed for continuous-time systems \cite{liu2022quantification,liu2023framework} and has also been extended to discrete-time systems \cite{pasion2026long}. In the continuous-time setting, equilibria are natural candidates because every observable is constant along an equilibrium trajectory. However, many biologically relevant long transients can occur far from equilibrium. An observed quantity may remain nearly constant even while the full state of the system moves along a nonstationary trajectory. This situation arises, for instance, during a honeymoon period, when disease prevalence appears to remain low for an extended time while other unobserved variables, such as the susceptible population, slowly evolve and eventually create conditions for a rapid transition.

The purpose of the present work is to study these nonequilibrium transient centers. We focus on nonstationary trajectories along which the observable is constant. In other words, the full state continues to move, but the observable does not change along the candidate trajectory. However, this condition alone is not enough. One must also show that nearby trajectories eventually leave a fixed slow regime in finite time. Thus, the main technical problem is to develop verifiable finite-time escape criteria near nonequilibrium candidate trajectories. We develop several such criteria based on growth estimates along individual trajectories, together with a transport argument for situations in which these estimates become valid only after a trajectory enters a suitable region of phase space.

As an application, we study an epidemic model with delay of stability loss introduced by Li et al. \cite{li2016turning}. In this model, the total host population evolves slowly, and the disease-free dynamics pass through a turning point where the disease-free state changes from transversely attracting to transversely repelling. This loss of stability does not, however, lead to an immediate departure from the disease-free manifold. Instead, trajectories may remain close to the disease-free manifold for a long time during a prolonged interepidemic period, before the infectious population grows rapidly and an outbreak occurs. Within the framework of transient centers, the prolonged interepidemic period corresponds to a long transient during which disease prevalence changes slowly, while the subsequent outbreak corresponds to a finite-time departure from a prescribed slow regime.

The paper is organized as follows. In Section~\ref{sec:preliminaries}, we recall the basic definitions of transient points, transient centers, and reachable transient centers for continuous-time systems. We also state the finite-time escape characterizations that will be used throughout the paper. In Section~\ref{sec:finite-time-escape}, we develop finite-time escape criteria for identifying nonequilibrium transient centers and explain how these criteria can be used to establish reachability. In Section~\ref{sec:application}, we apply the theory to the epidemic model of \cite{li2016turning}. We conclude in Section~\ref{sec:conclusion} with a summary of the main results.

\section{Preliminaries} \label{sec:preliminaries}

We denote the set of all positive integers by $\mathbb{N}$. For $n\in\mathbb{N}$, the $n$-dimensional real Euclidean space is denoted by $\mathbb{R}^n$, and $\|\cdot\|$ denotes the Euclidean norm on $\mathbb{R}^n$. For $x,y\in\mathbb{R}^n$, we write $x\cdot y=x^\top y$ for their Euclidean inner product. When $n=1$, we write $\mathbb{R}^1$ simply as $\mathbb{R}$ and use $|\cdot|$ to denote the absolute value of a real number. We write $\overline{\mathcal{C}}$ and $\partial\mathcal{C}$ for the closure and boundary, respectively, of a set $\mathcal{C}\subset\mathbb{R}^n$. For $j\in \{0\} \cup\mathbb{N}$, we denote by $C^j(\mathbb{R}^n,\mathbb{R})$ the space of scalar-valued functions whose partial derivatives of order at most $j$ exist and are continuous. Similarly, $C^j(\mathbb{R}^n,\mathbb{R}^n)$ denotes the space of vector-valued functions whose component functions belong to $C^j(\mathbb{R}^n,\mathbb{R})$. For a scalar-valued function $v\in C^1(\mathbb{R}^n,\mathbb{R})$, the notation $\nabla v(x)$ denotes its gradient at $x$. For a vector field $f\in C^1(\mathbb{R}^n,\mathbb{R}^n)$, the notation $Df(x)$ denotes its Jacobian matrix at $x$.

We begin by recalling the framework of transient points, transient centers, and reachable transient centers introduced in \cite{liu2022quantification} and further developed in \cite{liu2023framework}. Unless otherwise indicated, the definitions and results in this section are restated in the notation used throughout this paper and, in some cases, are presented in an equivalent form. Proofs are omitted for brevity and may be found in the cited works.

Let $f:\mathbb{R}^n\to\mathbb{R}^n$ be a $C^1(\mathbb{R}^n, \mathbb{R}^n)$ vector field. We consider the autonomous system
\begin{align}\label{eq:ode}
    x'=f(x),
\end{align}
and denote its flow by $\varphi_t$ under the following assumptions.
\begin{enumerate}[({H}1)]
    \item  For any $x(0) = \xi \in \mathbb{R}^n$, the system has a unique solution that is differentiable for all $t \in \mathbb{R}$.
\item  For any $x(0) = \xi \in \mathbb{R}^n$, the solution depends continuously on $\xi$ for all $t \in \mathbb{R}$.
\end{enumerate}
Let $v:\mathbb{R}^n\to\mathbb{R}$ be a $C^1(\mathbb{R}^n, \mathbb{R})$ function. We denote the derivative of $v$ along solutions of \eqref{eq:ode} by
\begin{align}\label{eq:Dv}
    Dv(x)=\nabla v(x)\cdot f(x).
\end{align}
Thus, if $x(t)=\varphi_t\xi$, then $\frac{d}{dt}v(x(t))=Dv(x(t))$. Therefore, $|Dv|$ measures the instantaneous speed of the observable $v$ along the flow. In this framework, a trajectory is considered slow when $|Dv|$ is small. In applications, we regard $v$ as the observable function representing a quantity of interest in the system and typically defined in terms of the system's state variables.

For $s>0$, define the quantity
\begin{align}
    T_s(\xi)=\inf\{t\geq 0:|Dv(\varphi_t \xi)|>s\}, \label{eq:Ts}
\end{align}
with the convention that $T_s(\xi)=\infty$ if the set is empty. We interpret the condition $|Dv(\varphi_t\xi)|\leq s$ as meaning that the trajectory being \textit{$s$-slow} at time $t$, while $|Dv(\varphi_t\xi)|>s$ means that it has entered the corresponding \textit{$s$-fast} regime. Accordingly, $T_s(\xi)$ records the first time at which the observable speed along the trajectory exceeds the threshold $s$.

\begin{figure}[htbp]
     \centering
     \includegraphics[width=0.7\textwidth]{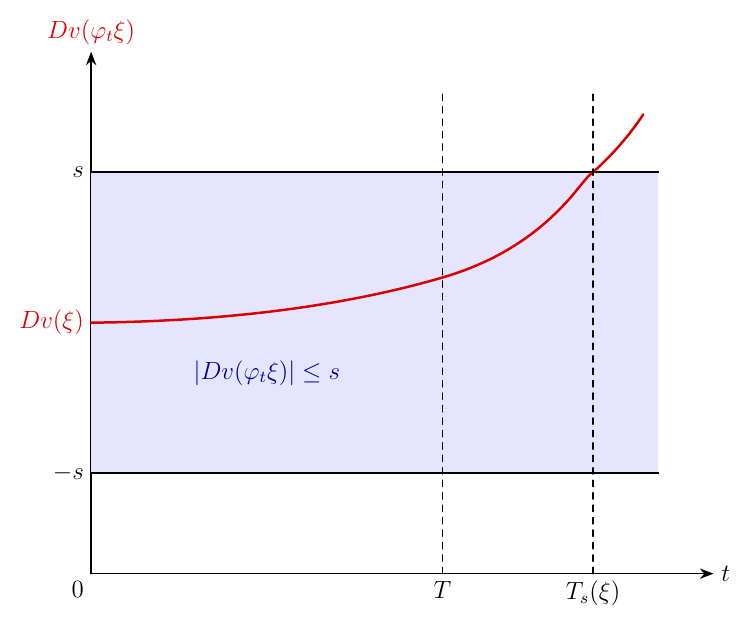}
     \caption{Illustration of a $(v,s,T)$-transient point $\xi$.}
     \label{fig:transient-point}
 \end{figure}

\begin{definition}[$(v,s,T)$-transient point]
Let $s>0$ and $T>0$. A point $\xi\in\mathbb{R}^n$ is called a \textit{$(v,s,T)$-transient point} if $T<T_s(\xi)<\infty$.
\end{definition}

The condition $T<T_s(\xi)<\infty$ implies that the observable $v$ changes slowly for at least time $T$, but the trajectory eventually leaves the $s$-slow regime, as illustrated in Figure~\ref{fig:transient-point}. We also emphasize that the condition $T_s(\xi)<\infty$ is essential since it distinguishes long transient dynamics from dynamics that remain slow forever.

\begin{definition}[$v$-transient center]
A point $x^*\in \mathbb{R}^n$ is called a $v$-transient center if there exists $S>0$ such that, for every $0<s<S$, every $T>0$, and every neighborhood $U$ of $x^*$, there exists a point $\xi\in U$ that is a $(v,s,T)$-transient point.
\end{definition}

Thus, a transient center is not a transient point. Rather, it organizes nearby initial conditions that generate arbitrarily slow dynamics for an arbitrarily long but finite time. A necessary condition for $x^* \in \mathbb{R}^n$ to be a $v$-transient center is that the observable speed vanish along the entire forward orbit of $x^*$. This led to the construction of the following collection of points
\begin{align}
    \Xi =\{x \in \mathbb{R}^n:Dv(\varphi_t x)=0 ~\text{for all}~ t \in \mathbb{R}\}. \label{eq:Xi}
\end{align}
The set $\Xi$ consists of candidate points for transient centers because the observable is constant along the corresponding trajectories. However, membership in $\Xi$ is not sufficient to be a $v$-transient center. In addition to this, we also require nearby trajectories to eventually escape the slow region.

The following criterion is often the most convenient way to determine which members of $\Xi$ are transient centers.

\begin{theorem}[Theorem 3.4 in \cite{liu2023framework}] \label{thm:finite-time criterion}
Let $x^*\in\Xi$. Then $x^*$ is a $v$-transient center if and only if there exists $S^*>0$ such that every neighborhood $U$ of $x^*$ contains a point $\xi\in U$ satisfying $T_{S^*}(\xi)<\infty$.
\end{theorem}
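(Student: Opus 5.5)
We prove the two implications separately. The forward direction is essentially immediate. The reverse direction uses continuity of the flow (H2) together with the fact that $Dv$ vanishes identically along the orbit of $x^*\in\Xi$.

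\emph{Necessity.} Suppose $x^*$ is a $v$-transient center with constant $S$. Fix any $S^*\in(0,S)$, say $S^*=S/2$. Given a neighborhood $U$ of $x^*$, the definition with $s=S^*$ and, say, $T=1$ produces $\xi\in U$ with $1<T_{S^*}(\xi)<\infty$. In particular $T_{S^*}(\xi)<\infty$, which is the required condition.

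\emph{Sufficiency.} Suppose $S^*$ is as in the statement, and set $S=S^*$. Fix $0<s<S$, $T>0$, and a neighborhood $U$ of $x^*$. We must find $\xi\in U$ with $T<T_s(\xi)<\infty$. The plan has two steps.

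\begin{enumerate}
\item We first establish a uniform slowness region. Since $x^*\in\Xi$, we have $Dv(\varphi_t x^*)=0$ for all $t\in[0,T]$. The map $(t,x)\mapsto Dv(\varphi_t x)$ is continuous, because $Dv$ is continuous ($f\in C^1$ and $v\in C^1$) and the flow is jointly continuous by (H1)--(H2). By compactness of $[0,T]$, there is a neighborhood $W\subset U$ of $x^*$ such that $|Dv(\varphi_t x)|<s$ for all $x\in W$ and all $t\in[0,T]$. This is the usual tube-lemma argument.

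\item Next we locate escaping points inside $W$. By hypothesis, $W$ contains a point $\xi$ with $T_{S^*}(\xi)<\infty$. Since $s<S^*$, the set $\{t\ge 0: |Dv(\varphi_t\xi)|>S^*\}$ is contained in $\{t\ge0:|Dv(\varphi_t\xi)|>s\}$, so $T_s(\xi)\le T_{S^*}(\xi)<\infty$. On the other hand, $|Dv(\varphi_t\xi)|<s$ for every $t\in[0,T]$. Continuity of $t\mapsto Dv(\varphi_t\xi)$ then keeps $|Dv|$ below $s$ on a slightly larger interval $[0,T+\epsilon)$, which gives $T_s(\xi)\ge T+\epsilon>T$. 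Hence $\xi$ is a $(v,s,T)$-transient point in $U$.
\end{enumerate}

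The only delicate point is the uniformity in step~1. It requires joint continuity of $(t,x)\mapsto\varphi_t x$ on $[0,T]\times\mathbb{R}^n$, not merely continuity in $x$ for each fixed $t$. We take this as part of (H2), or deduce it from standard ODE theory for $C^1$ vector fields with global solutions. Given joint continuity, the compactness argument is standard.

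Notice that the argument only ever uses $Dv(\varphi_t x^*)=0$ for $t\ge 0$. The backward half of the condition defining $\Xi$ is not needed for this statement.
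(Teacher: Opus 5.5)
Your proof is correct. Necessity follows directly from the definition by taking $s=S/2$. For sufficiency, the compactness argument on $[0,T]$, using joint continuity of $(t,x)\mapsto Dv(\varphi_t x)$ and $Dv(\varphi_t x^*)=0$ on $[0,T]$, gives $T_s(\xi)>T$ on a neighborhood of $x^*$, and monotonicity gives $T_s(\xi)\le T_{S^*}(\xi)<\infty$ for $s<S^*$. The paper does not reproduce a proof and defers to \cite{liu2023framework}; your argument is the natural one behind that cited result, and you are right that only the forward half of the condition defining $\Xi$ is used.
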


We shall also use the following structural properties, with the final assertion stated in terms of the closure of the full orbit through $x^*$.

\begin{theorem}[Theorems 2.3 and 3.1 in \cite{liu2023framework}] \label{thm:structure}
Let $x^*$ be a $v$-transient center. Then:
\begin{enumerate}[(a)]
    \item $Dv(x^*) =0$;

    \item $\varphi_tx^*$ is a $v$-transient center for all $t\geq 0$;
    
    \item if $Dv(\varphi_{-t}x^*)=0$ for all $t>0$, then $\varphi_t x^*$ is a
    $v$-transient center for all $t\in\mathbb{R}$;
    
    \item $x^*$ is a $(\sigma v+\beta)$-transient center for every
    $\sigma\neq 0$ and $\beta\in\mathbb{R}$;
    
    \item if, in addition, $x^*\in\Xi$, then every point in $\overline{\{\varphi_t x^*:t\in\mathbb{R}\}}$ is also a $v$-transient center.
\end{enumerate}
\end{theorem}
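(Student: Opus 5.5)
We prove the five assertions of Theorem~\ref{thm:structure} in order, working directly from the definition of a $v$-transient center and from the continuity of the flow in (H2). Throughout, fix $S>0$ as in the definition for $x^*$.

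For (a), we argue by contradiction. Suppose $|Dv(x^*)|=c>0$ and choose $s<\min\{S,c\}$. By continuity of $Dv$ there is a neighborhood $U$ of $x^*$ on which $|Dv|>s$. Then every $\xi\in U$ satisfies $T_s(\xi)=0$, so no $\xi\in U$ is a $(v,s,T)$-transient point for any $T>0$, which is a contradiction.

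For (b), fix $t_0\ge0$, $0<s<S$, $T>0$ and a neighborhood $W$ of $\varphi_{t_0}x^*$. By (H2), $U=\varphi_{t_0}^{-1}(W)$ is a neighborhood of $x^*$. Apply the definition with time $T+t_0$ to get $\xi\in U$ with $T+t_0<T_s(\xi)<\infty$. The point $\eta=\varphi_{t_0}\xi$ lies in $W$. By the semigroup property, $T_s(\eta)=T_s(\xi)-t_0$, since $T_s(\xi)>t_0$ means the trajectory is still $s$-slow on $[0,t_0]$. Hence $T<T_s(\eta)<\infty$.

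For (c), fix $t_0>0$ and aim to show $y=\varphi_{-t_0}x^*$ is a transient center. Given a neighborhood $W$ of $y$, the set $U=\varphi_{t_0}(W)$ is a neighborhood of $x^*$ by (H2). This is the main obstacle: a point $\xi\in U$ with $T<T_s(\xi)<\infty$ pulls back to $\eta=\varphi_{-t_0}\xi\in W$, but we must ensure $\eta$ is still slow on $[0,t_0]$. We use the hypothesis $Dv(\varphi_{-t}x^*)=0$ on $[0,t_0]$ together with uniform continuity of $\tau\mapsto Dv(\varphi_\tau z)$ on compact sets. First shrink $W$ so that $|Dv(\varphi_\tau z)|<s$ for all $z\in W$ and $\tau\in[0,t_0]$. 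Then $T_s(\eta)=t_0+T_s(\xi)$, which lies in $(T,\infty)$. Combined with (b), this gives the claim for all $t\in\mathbb{R}$.

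For (d), note that $D(\sigma v+\beta)=\sigma Dv$. Hence $T^{\sigma v+\beta}_s=T^{v}_{s/|\sigma|}$, and the definition holds with $S'=|\sigma|S$.

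For (e), combine (c), which applies since $x^*\in\Xi$, with a closure argument based on Theorem~\ref{thm:finite-time criterion}. By (c), every orbit point $\varphi_tx^*$ is a transient center. Moreover, every point of $\overline{\{\varphi_tx^*\}}$ lies in $\Xi$: the set $\Xi$ is closed because each condition $Dv(\varphi_t x)=0$ is closed in $x$. It remains to get a uniform $S^*$. The proof of (b)/(c) shows that the same $S$ works for every $\varphi_tx^*$, since the escape threshold is not altered by transport. Now let $z$ be a limit point and $V$ a neighborhood of $z$. Then $V$ contains some $\varphi_tx^*$, and hence a point $\xi$ with $T_{S/2}(\xi)<\infty$. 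Theorem~\ref{thm:finite-time criterion} with $S^*=S/2$ then gives that $z$ is a transient center.
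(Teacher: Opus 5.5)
The paper does not prove this statement. It quotes the result from Liu et al.\ and omits the proof, so there is no in-paper argument to compare against. Your proof is correct and self-contained. In (a), (b), and (d) you give the natural direct arguments. Your identities $T_s(\varphi_{t_0}\xi)=T_s(\xi)-t_0$ and, after shrinking $W$, $T_s(\varphi_{-t_0}\xi)=t_0+T_s(\xi)$ are right. They also leave the threshold $S$ unchanged along the whole orbit, and that uniformity is exactly what (e) needs.

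Two small remarks.

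In (c), the shrinking step needs joint continuity of $(\tau,z)\mapsto Dv(\varphi_\tau z)$ on $[0,t_0]\times\mathbb{R}^n$. Compactness of $[0,t_0]$ then makes $\{z:\max_{0\le\tau\le t_0}|Dv(\varphi_\tau z)|<s\}$ an open neighborhood of $y$. This is standard for a $C^1$ vector field, but it is slightly more than (H2) literally gives, since (H2) only asserts continuity in $\xi$ for each fixed $t$. For the same reason, ``uniform continuity of $\tau\mapsto Dv(\varphi_\tau z)$'' is not quite the right justification. Also, the endpoint value $Dv(\varphi_{t_0}y)=Dv(x^*)=0$ comes from (a), or from continuity as $t\to 0^+$, not from the hypothesis, which covers only $t>0$.

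In (e), your route through the closedness of $\Xi$ and Theorem~\ref{thm:finite-time criterion} is valid and not circular. The "if" direction of that criterion uses only membership in $\Xi$ and continuity of the flow. However, the detour is unnecessary. The set of points satisfying the definition of a $v$-transient center with a fixed threshold $S$ is closed. Any open neighborhood of a limit point $z$ of the orbit is also an open neighborhood of some orbit point $\varphi_t x^*$. That neighborhood therefore already contains a $(v,s,T)$-transient point for every $0<s<S$ and $T>0$. This gives (e) directly, with the same $S$ rather than $S/2$. It also shows that the assumption $x^*\in\Xi$ is used only to invoke (c) along the backward orbit.
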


We now recall the reachable version of the theory. Let $\widehat{\varphi}_t$ denote the flow of the reversed system
\begin{align}
    x'=-f(x). \label{eq:reversed-system}
\end{align}
By uniqueness of solutions, whenever the original flow is defined for all $t\in\mathbb{R}$, we have $\widehat{\varphi}_t x=\varphi_{-t}x$. We define the corresponding reversed exit time by
\begin{align}
    \widehat{T}_s(\xi) = \inf\{t\geq 0 : |Dv(\widehat{\varphi}_t\xi)|>s\}, \label{eq:hatTs}
\end{align}
with the convention that $\widehat{T}_s(\xi)=\infty$ if the set is empty. Thus, $\widehat{T}_s(\xi)$ records the first time, in backward time, at which the observable speed exceeds the threshold $s$.

\begin{definition}[reachable $(v,s,T)$-transient point]
Let $s>0$ and $T>0$. A point $\xi\in\mathbb{R}^n$ is called a \textit{reachable $(v,s,T)$-transient point} if it is a $(v,s,T)$-transient point for both the original system \eqref{eq:ode} and the reversed system \eqref{eq:reversed-system}. Equivalently, $T<T_s(\xi)<\infty$ and $T<\widehat{T}_s(\xi)<\infty$.
\end{definition}

Hence, a reachable transient point remains in the $s$-slow regime for a long time both forward and backward in time, but exits the slow regime in finite time in both directions. This captures the idea that the slow regime is not generated merely by initializing near it but can also be reached dynamically from outside it.

\begin{definition}[reachable $v$-transient center]
A point $x^*\in\mathbb{R}^n$ is called a \textit{reachable $v$-transient center} if there exists $S>0$ such that, for every $0<s<S$, every $T>0$, and every neighborhood $U$ of $x^*$, there exists a point $\xi\in U$ that is a reachable $(v,s,T)$-transient point.
\end{definition}

Every reachable $v$-transient center is a $v$-transient center. The converse, however, does not necessarily hold. Reachability is a stronger condition which requires nearby trajectories to have finite-time escape both forward and backward in time.

The following result gives some of the basic properties of reachable transient centers, with the final assertion stated in terms of the closure of the full orbit through $x^*$.

\begin{theorem}[Theorems 4.4 and 4.6 in \cite{liu2023framework}]
Let $x^*$ be a reachable $v$-transient center. Then:
\begin{enumerate}[(a)]
    \item $x^*$ is a $v$-transient center for both the original system \eqref{eq:ode} and the reversed system \eqref{eq:reversed-system};

    \item $x^*\in\Xi$;

    \item $\varphi_t x^*$ is a reachable $v$-transient center for all $t\in\mathbb{R}$;

    \item $x^*$ is also a reachable $v$-transient center for the reversed system \eqref{eq:reversed-system};

    \item $x^*$ is a reachable $(\sigma v+\beta)$-transient center for every $\sigma\neq 0$ and $\beta\in\mathbb{R}$;

    \item every point in the set $\overline{\{\varphi_t x^*:t\in\mathbb{R}\}}$ is a reachable $v$-transient center.
\end{enumerate}
\end{theorem}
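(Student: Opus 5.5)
We prove the six assertions in the order listed, relying only on the definitions, on Theorem~\ref{thm:finite-time criterion} and on Theorem~\ref{thm:structure}. Throughout, $S>0$ is the constant from the definition of a reachable $v$-transient center for $x^*$, and we use $\widehat{\varphi}_t=\varphi_{-t}$.

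\emph{Parts (a), (d), (e).} Each is a direct unwinding of the definitions. A reachable $(v,s,T)$-transient point is, by definition, a $(v,s,T)$-transient point for both \eqref{eq:ode} and \eqref{eq:reversed-system}. Hence the same $S$, applied to the same points $\xi\in U$, witnesses that $x^*$ is a $v$-transient center for both systems, which gives (a).

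For (d), the observable speed of $v$ under $-f$ is $-Dv$. The threshold conditions $|Dv|\le s$ are therefore unchanged, and the forward and backward exit times simply swap roles. So the definition is symmetric under reversal.

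For (e), the speed of $\sigma v+\beta$ is $\sigma Dv$, so $T_s^{\sigma v+\beta}=T_{s/|\sigma|}^{v}$, and likewise for $\widehat T$. We take $S'=|\sigma|S$: for $s<S'$ we have $s/|\sigma|<S$, so the points supplied for $v$ at threshold $s/|\sigma|$ serve for $\sigma v+\beta$ at threshold $s$.

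\emph{Part (b).} By (a) and Theorem~\ref{thm:structure}(b), every $\varphi_t x^*$ with $t\ge0$ is a $v$-transient center for \eqref{eq:ode}. Theorem~\ref{thm:structure}(a) then gives $Dv(\varphi_tx^*)=0$ for all $t\ge 0$. Applying the same two results to the reversed system, which is legitimate by (a), every $\widehat\varphi_tx^*=\varphi_{-t}x^*$ with $t\ge0$ is a transient center of \eqref{eq:reversed-system}, so $-Dv(\varphi_{-t}x^*)=0$. Together these show $Dv(\varphi_tx^*)=0$ for all $t\in\mathbb{R}$, that is, $x^*\in\Xi$.

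\emph{Part (c).} This is the step needing real work, because reachability has to be transported along the orbit with uniform control. Fix $\tau\in\mathbb{R}$, $0<s<S$, $T>0$ and a neighborhood $U$ of $\varphi_\tau x^*$. By (H2), $V=\varphi_{-\tau}(U)$ is a neighborhood of $x^*$. Apply the definition at $x^*$ with threshold $s$ and the enlarged time $T+|\tau|$. This gives $\xi\in V$ with
\[
T+|\tau|<T_s(\xi)<\infty \quad\text{and}\quad T+|\tau|<\widehat T_s(\xi)<\infty .
\]
Set $\eta=\varphi_\tau\xi\in U$. Since $\eta$ lies on the same orbit as $\xi$, shifted by $\tau$, its slow stretch of the trajectory is the one for $\xi$ translated in time. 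Consider the case $\tau\ge0$. Then $T_s(\eta)=T_s(\xi)-\tau>T$, because the whole interval $[0,T_s(\xi))$ is slow for $\xi$ and $\tau$ lies inside it. In backward time, $\eta$ first retraces the segment from $\varphi_\tau\xi$ back to $\xi$, which is slow, and then continues with the backward trajectory of $\xi$. Hence $\widehat T_s(\eta)=\tau+\widehat T_s(\xi)$, which is finite and exceeds $T$. The case $\tau<0$ is symmetric. The main care required is checking that the infimum definitions of the exit times behave exactly additively under these shifts. This is where the margin $|\tau|$ is essential: it guarantees that the shift point $\tau$ lies inside the slow interval of $\xi$.

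\emph{Part (f).} Let $y\in\overline{\{\varphi_tx^*:t\in\mathbb{R}\}}$, and fix $s<S$, $T>0$ and a neighborhood $U$ of $y$. Choose an open $U'\subset U$ containing $y$. Since $y$ is in the closure of the orbit, $U'$ contains some $\varphi_tx^*$. By (c), $\varphi_tx^*$ is a reachable $v$-transient center with the same $S$, because the construction in (c) did not change $S$. Since $U'$ is also a neighborhood of $\varphi_tx^*$, it contains a reachable $(v,s,T)$-transient point. Thus $y$ is a reachable $v$-transient center.
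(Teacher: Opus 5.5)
Your proof is correct. The paper does not prove this statement itself: it is restated from \cite{liu2023framework} with the proof omitted. There is therefore no in-paper argument to compare against, and your write-up is a valid self-contained derivation that uses only the definitions and Theorem~\ref{thm:structure}.

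The individual parts check out as follows.
\begin{itemize}
\item Parts (a), (d) and (e) are correct unwindings of the definitions. In (d), the point is that $|\nabla v\cdot(-f)|=|Dv|$, so the forward and backward exit times simply swap.
\item In (b), applying Theorem~\ref{thm:structure}(a),(b) in both time directions is legitimate, because the reversed system satisfies the same hypotheses.
\item In (f), you correctly use two facts: the set of reachable centers sharing a common $S$ is closed, and your construction in (c) does not change $S$.
\end{itemize}

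The one detail you flagged in (c) and should write out is the exact additivity of the exit times. Take $\tau\ge 0$. The bound $T_s(\xi)>T+\tau$ gives $|Dv(\varphi_w\xi)|\le s$ for all $w\in[0,\tau]$. Hence no $t\in[0,\tau]$ lies in the backward exit set of $\varphi_\tau\xi$, which gives $\{t\ge0:|Dv(\widehat\varphi_t\varphi_\tau\xi)|>s\}=\tau+\{u\ge0:|Dv(\widehat\varphi_u\xi)|>s\}$. It follows that $\widehat T_s(\varphi_\tau\xi)=\tau+\widehat T_s(\xi)$. The forward identity $T_s(\varphi_\tau\xi)=T_s(\xi)-\tau$ uses the same bound: every element of the forward exit set of $\xi$ is at least $T_s(\xi)>\tau$. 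The case $\tau<0$ is the mirror image, with the roles of $T_s$ and $\widehat T_s$ exchanged.

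A minor simplification is available for (b): it follows immediately from (c). Each $\varphi_t x^*$ with $t\in\mathbb{R}$ is then a $v$-transient center by (a), so $Dv(\varphi_t x^*)=0$ by Theorem~\ref{thm:structure}(a).
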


The reachable analogue of Theorem~\ref{thm:finite-time criterion} is the following.

\begin{theorem}[Theorem 4.8 in \cite{liu2023framework}]\label{thm:reachable finite-time criterion}
Let $x^*\in\Xi$. Then $x^*$ is a reachable $v$-transient center if and only if there exists $S^*>0$ such that every neighborhood $U$ of $x^*$ contains a point
$\xi\in U$ satisfying $T_{S^*}(\xi)<\infty$ and $\widehat{T}_{S^*}(\xi)<\infty$.
\end{theorem}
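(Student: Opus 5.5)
The plan is to derive the reachable criterion from the one-sided results already stated, namely Theorem~\ref{thm:finite-time criterion} applied to the original system and to the reversed system \eqref{eq:reversed-system}, together with continuity of the flow.

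\emph{Necessity.} Suppose $x^*$ is a reachable $v$-transient center with constant $S$. Take $S^*=S/2$. For any neighborhood $U$ of $x^*$, apply the definition with $s=S^*$ and any $T>0$. This produces $\xi\in U$ with $T<T_{S^*}(\xi)<\infty$ and $T<\widehat{T}_{S^*}(\xi)<\infty$. In particular both exit times are finite, which is the required condition.

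\emph{Sufficiency.} Suppose $S^*$ is as in the statement, and set $S=S^*$. Fix $0<s<S$, $T>0$ and a neighborhood $U$ of $x^*$. I will find a point $\xi\in U$ that is a reachable $(v,s,T)$-transient point. The steps are as follows.
\begin{enumerate}[(i)]
\item Since $x^*\in\Xi$, we have $Dv(\varphi_t x^*)=0$ for all $t\in[-T,T]$. By (H2), the map $(t,\xi)\mapsto Dv(\varphi_t\xi)$ is continuous. By compactness of $[-T,T]$, there is a neighborhood $V\subset U$ of $x^*$ with $|Dv(\varphi_t\xi)|<s/2$ for all $\xi\in V$ and $|t|\le T$. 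Hence $T_s(\xi)>T$ and $\widehat{T}_s(\xi)>T$ for every $\xi\in V$.
\item By hypothesis, $V$ contains a point $\xi$ with $T_{S^*}(\xi)<\infty$ and $\widehat{T}_{S^*}(\xi)<\infty$.
\item Since $s<S^*$, the set $\{t\ge0:|Dv(\varphi_t\xi)|>s\}$ contains $\{t\ge 0:|Dv(\varphi_t\xi)|>S^*\}$, which is nonempty. Therefore $T_s(\xi)\le T_{S^*}(\xi)<\infty$. The same comparison gives $\widehat{T}_s(\xi)\le\widehat{T}_{S^*}(\xi)<\infty$.
\end{enumerate}
Combining (i) and (iii) gives $T<T_s(\xi)<\infty$ and $T<\widehat{T}_s(\xi)<\infty$. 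So $\xi\in U$ is a reachable $(v,s,T)$-transient point, as required.

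The main point needing care is step (i), and in particular the backward direction. The definition of $\Xi$ quantifies over all $t\in\mathbb{R}$, and $\widehat{\varphi}_t=\varphi_{-t}$ by (H1). These two facts together ensure that uniform smallness of $Dv$ holds on the two-sided compact time window near $x^*$. This is exactly why membership in $\Xi$, rather than only $Dv(\varphi_t x^*)=0$ for $t\ge0$, is assumed. The remaining steps are monotonicity of the exit times in the threshold.
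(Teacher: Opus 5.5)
Your proof is correct. The paper gives no proof of this statement: it is quoted from \cite{liu2023framework} with proofs omitted. Your argument is the natural two-sided version of the reasoning behind Theorem~\ref{thm:finite-time criterion}. Necessity follows by taking $s=S/2$ in the definition. For sufficiency, $x^*\in\Xi$ and continuity of the flow make $|Dv(\varphi_t\xi)|$ uniformly small on the window $|t|\le T$ for $\xi$ near $x^*$, and $T_s$ and $\widehat{T}_s$ are monotone in $s$.

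Three small remarks.

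First, your opening sentence says you will apply Theorem~\ref{thm:finite-time criterion} to the original and reversed systems. That route alone would fail, because it can produce different nearby points for forward and backward escape. The paper stresses exactly this issue at the end of Section~\ref{sec:finite-time-escape}. Your actual steps never invoke that theorem and work with a single $\xi$ supplied by the hypothesis, so simply drop that sentence.

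Second, joint continuity of $(t,\xi)\mapsto\varphi_t\xi$ is not literally (H2), which only gives continuity in $\xi$ for each fixed $t$. It is nevertheless standard for a $C^1$ field whose solutions exist for all time. It follows from (H2), the flow property, and local existence on a uniform time interval.

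Third, the strict inequality $T_s(\xi)>T$ deserves one line of justification. If the infimum equaled $T$, continuity in $t$ would force $|Dv(\varphi_T\xi)|\ge s$, contradicting $|Dv(\varphi_T\xi)|<s/2$. The same argument gives $\widehat{T}_s(\xi)>T$.
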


Therefore, for points in $\Xi$, reachability can be verified by finding nearby initial conditions that escape a fixed slow regime in finite time both forward and backward. In this sense, the set $\Xi$ provides the mechanism for arbitrarily long slowness, while the two finite-time escape conditions provide the forward and backward accessibility conditions that are needed for reachability.

Transient centers arising from equilibria have already been treated in \cite{liu2023framework}. Among other results, this work shows that Lyapunov stable equilibria cannot be $v$-transient centers for any $v \in C^1(\mathbb{R}^n, \mathbb{R})$ and gives criteria under which unstable equilibria are transient centers or reachable transient centers. Since the purpose of the present work is to study transient centers that arise along nonstationary trajectories, we do not develop a new theory for the equilibrium case. Instead, we use the existing equilibrium results when needed and focus our analysis on nonequilibrium candidates $x\in\Xi$ satisfying $f(x)\neq 0$.


In the next section, we develop sufficient finite-time escape criteria for points in $\Xi$ to be $v$-transient centers and explain how the same ideas can be used to establish reachable $v$-transient centers. Throughout the remainder of the paper, we write $g=Dv$ for simplicity of notation. Thus, the condition $|g|\leq s$ describes the $s$-slow regime, while $|g|>s$ describes the corresponding $s$-fast regime. If $x^*\in\Xi$, then $g(\varphi_t x^*)=0$ for all $t\in\mathbb{R}$, so the observable is constant along the trajectory of $x^*$. To prove that $x^*$ is a $v$-transient center, Theorem~\ref{thm:finite-time criterion} shows that it is enough to find, in every neighborhood of $x^*$, an initial condition  whose trajectory leaves a fixed slow regime in finite forward time. For reachable transient centers, one must verify the corresponding escape property in both time directions as stated in Theorem~\ref{thm:reachable finite-time criterion}. 

\section{Finite-time escape criteria for transient centers} \label{sec:finite-time-escape}

We shall now use the derivative of $g = Dv$ along solutions. Since $g$ measures the instantaneous speed of the observable $v$, we know that the quantity
\begin{align}
    Dg(x)=\nabla g(x)\cdot f(x) \label{eq:Dg}
\end{align}
measures how this observable speed changes along the flow. Indeed, if $x(t)=\varphi_t \xi$ and $G(t)=g(x(t))$, then $G'(t)=Dg(x(t))$.

A first escape mechanism arises when $Dg$ factors through $g$ in a neighborhood of the candidate point. More precisely, we assume that $g \in C^1(\mathbb{R}^n,\mathbb{R})$ and $Dg=\alpha g$ for some continuous function $\alpha$. Along trajectories, this gives $g'=\alpha g$. Hence, if $\alpha$ is uniformly positive in a suitable region, then small nonzero values of $|g|$ grow exponentially. This gives a simple mechanism for finite-time escape from the slow regime, as shown in the following theorem.

\begin{theorem} \label{thm:alpha criterion}
Let $x^*\in\Xi$ and $g\in C^1(\mathbb R^n,\mathbb R)$. Suppose there exist an open set $W\subset\mathbb R^n$, a continuous function
$\alpha:W\to\mathbb R$, and constants $m>0$ and $S^*>0$ such that 
\begin{enumerate}[({A}1)]
    \item for every neighborhood $V$ of $x^*$, there exists $z\in V\cap W$ such that $0<|g(z)|\leq S^*$;

    \item if $z\in W$ and $|g(z)|\leq S^*$, then $\varphi_t z\in W$ for all $0\leq t<T_{S^*}(z)$;

    \item for every $y\in W$ satisfying $|g(y)|\leq S^*$, we have 
    \begin{align*}
        Dg(y)=\alpha(y)g(y) \quad \text{and} \quad \alpha(y)\geq m. 
    \end{align*}
\end{enumerate}
Then $x^*$ is a $v$-transient center.
\end{theorem}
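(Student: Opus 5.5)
The plan is to verify the finite-time criterion of Theorem~\ref{thm:finite-time criterion} with the same constant $S^*$ that appears in the hypotheses. Since $x^*\in\Xi$, it suffices to show that every neighborhood $V$ of $x^*$ contains a point $z$ with $T_{S^*}(z)<\infty$. By (A1), any such $V$ contains some $z\in V\cap W$ with $0<|g(z)|\le S^*$. I will show that this particular $z$ satisfies $T_{S^*}(z)<\infty$, which finishes the proof.

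Fix such a $z$ and argue by contradiction, supposing $T_{S^*}(z)=\infty$. Then $|g(\varphi_t z)|\le S^*$ for all $t\ge0$. Hypothesis (A2) gives $\varphi_t z\in W$ for all $t\ge 0$. Hypothesis (A3) then applies at every point of the forward orbit.

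Set $G(t)=g(\varphi_t z)$ and $a(t)=\alpha(\varphi_t z)$. Then $a$ is continuous on $[0,\infty)$, since $\alpha$ is continuous on $W$ and the orbit stays in $W$. The function $G$ is $C^1$ and satisfies the linear scalar equation $G'(t)=Dg(\varphi_t z)=a(t)G(t)$, with $a(t)\ge m$. By uniqueness for linear ODEs, or by differentiating $G(t)\exp(-\int_0^t a)$, we get
\begin{align*}
G(t)=G(0)\exp\Bigl(\int_0^t a(\tau)\,d\tau\Bigr).
\end{align*}
Hence $|G(t)|\ge |g(z)|e^{mt}$. Because $g(z)\neq0$, this tends to infinity and eventually exceeds $S^*$, contradicting $|G(t)|\le S^*$ for all $t\ge0$. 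Therefore $T_{S^*}(z)<\infty$.

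One can also avoid the contradiction and argue directly. On $[0,T_{S^*}(z))$ the orbit stays in $W\cap\{|g|\le S^*\}$, so the same growth bound holds there. This forces $T_{S^*}(z)\le m^{-1}\log(S^*/|g(z)|)<\infty$.

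The only delicate point is making sure the factorization $Dg=\alpha g$ can be used along the whole relevant time interval. It is used only on points of $W$ where $|g|\le S^*$, which is exactly what (A2) together with the definition of $T_{S^*}$ guarantees. Continuity of $t\mapsto a(t)$ is needed so that the integrating-factor argument is legitimate. It follows from composing the continuous $\alpha$ with the continuous orbit map on $W$. Nonvanishing of $g(z)$ is essential, and (A1) provides it. With these checks in place, Theorem~\ref{thm:finite-time criterion} yields that $x^*$ is a $v$-transient center.
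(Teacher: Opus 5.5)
Your proof is correct and follows essentially the paper's argument. The paper takes $z_0$ from (A1), uses (A2)--(A3) on $[0,T_{S^*}(z_0))$ to get $G'=\alpha(z(t))G$ with $\alpha\geq m$, integrates to $|G(t)|\geq |G(0)|e^{mt}$, and concludes $T_{S^*}(z_0)\leq m^{-1}\log(S^*/|G(0)|)$, which is exactly your direct variant; your contradiction version and your remarks on the continuity of $t\mapsto\alpha(\varphi_t z)$ and on $g(z)\neq 0$ are just more explicit versions of the same steps.
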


\begin{proof}
By Theorem~\ref{thm:finite-time criterion}, it is enough to show that every neighborhood of $x^*$ contains a point whose trajectory exits the $S^*$-slow regime in finite time. Let $V$ be an arbitrary neighborhood of $x^*$. By (A1), choose $z_0\in V\cap W$ such that $0<|g(z_0)|\leq S^*$. Let $z(t)=\varphi_t z_0$ and $G(t)=g(z(t))$. For $0\leq t<T_{S^*}(z_0)$, we have $|G(t)|\leq S^*$. Hence, $z(t)\in W$ for all such $t$ by (A2). Using (A3) and the identity $G'(t) = Dg(z(t))$, we obtain
\begin{align*}
      G'(t)=\alpha(z(t))G(t), \qquad \alpha(z(t))\geq m.
\end{align*}
Therefore,
\begin{align*}
    |G(t)| = |G(0)|\exp\left(\int_0^t \alpha(z(\tau))~d\tau\right) \geq  |G(0)|e^{mt}.
\end{align*}
Thus, $|G(t)|$ reaches $S^*$ no later than
\begin{align*}
    t_*=\frac{1}{m}\log\left(\frac{S^*}{|G(0)|}\right)<\infty.
\end{align*}
This shows that $T_{S^*}(z_0)<\infty$, proving that $x^*$ is a $v$-transient center.
\end{proof}

Condition (A3) reduces the evolution of $g$ to the scalar equation $g'=\alpha g$ along trajectories and gives a uniform lower bound on the growth rate of $|g|$. Condition (A2) ensures that the trajectory remains in the region where the estimate is valid until it leaves the slow regime, while (A1) guarantees that one can start arbitrarily close to $x^*$ with small but nonzero observable speed. Together, these assumptions give the needed escape mechanism. We now apply Theorem~\ref{thm:alpha criterion} to a simple planar system. The purpose of the example below is to illustrate how the hypotheses of the theorem are verified in practice. Once a candidate point belongs to $\Xi$, the only remaining work is to choose a region $W$ where $Dg=\alpha g$, $\alpha$ is uniformly positive, and trajectories cannot leave $W$ before leaving the slow region.

\begin{example} \label{example: planar}
  Let us consider the planar system
\begin{align}
    x'=-xy,\qquad y'=x-1, \label{eq:xy-example}
\end{align}
with observable $v(x,y)=x$. This example was one of the motivating examples in \cite{liu2022quantification} where the origin was shown directly to be a $v$-transient center. It was later revisited in \cite{liu2023framework}, where the authors showed that the candidate set for this system is
\begin{align*}
    \Xi=\{(0,y):y\in\mathbb{R}\}\cup\{(1,0)\},
\end{align*}
and that all points in $\Xi$ except the equilibrium $(1,0)$ are $v$-transient centers. Thus, the conclusion that every point $(0,y)$ is a $v$-transient center is already known. Our purpose here is not to reprove this result from scratch but to show how Theorem~\ref{thm:alpha criterion} applies to the part of the candidate set where $y<0$.

We show that every $p^*=(0,y^*)$ with $y^*<0$ is a $v$-transient center. We refer the reader to Figure~\ref{fig:example_proof}(a) for an illustration of the constructions used below. First, $\nabla g(x,y)=(-y,-x)$, and so
\begin{align*}
    Dg(x,y)=\nabla g(x,y)\cdot f(x,y)
    =(-y,-x)\cdot(-xy,x-1)
    =x(y^2-x+1).
\end{align*}
Thus, for $y\neq 0$,
\begin{align}\label{eq:alpha-example}
    Dg(x,y)=\alpha(x,y)g(x,y) 
    \quad \text{with} \quad 
    \alpha(x,y)=-\frac{y^2-x+1}{y}.
\end{align}
Fix $p^*=(0,y^*)$ with $y^*<0$. Choose $\delta>0$ such that $y^*+\delta<0$, and set
\begin{align*}
    c=-(y^*+\delta)>0,
    \quad
    W=\{(x,y): y<y^*+\delta\}=\{(x,y):y<-c\}.
\end{align*}
We choose $\delta_x\in\left(0,1/2\right]$ and let $S^*=c\delta_x$.

We now verify the hypotheses of Theorem~\ref{thm:alpha criterion}. First, every neighborhood $V$ of $p^*$ contains points $z\in V\cap W$ with $0<|g(z)|\leq S^*$. Indeed, for sufficiently small $\varepsilon> 0$, the point $z_\varepsilon =(\varepsilon,y^*)$ belongs to $V\cap W$, and $0<|g(z_\varepsilon)|=|\varepsilon y^*|<S^*$. Next, suppose $z=(x,y)\in W$ and $|g(z)|\leq S^*$. Since $y<-c$, we have $|y|\geq c$. Thus, $|xy|\leq S^* =c \delta_x$ implies that $|x|\leq \delta_x$. Using \eqref{eq:alpha-example} and the fact that $y<0$ on $W$, we get
\begin{align*}
     \alpha(x,y)= \frac{y^2-x+1}{|y|} 
     \geq \frac{y^2-|x|+1}{|y|} 
    \geq \frac{y^2+1/2}{|y|} \geq |y| \geq c.
\end{align*}
Hence, the last hypothesis holds with $m=c>0$.

It remains to verify that trajectories starting in $W$ cannot leave $W$ before leaving the $S^*$-slow regime. Let $z_0\in W$ satisfy $|g(z_0)|\leq S^*$, and write $z(t)=\varphi_t z_0=(x(t),y(t))$. Suppose, for contradiction, that the trajectory leaves $W$ before time $T_{S^*}(z_0)$. Then there exists a first exit time
\begin{align*}
     \tau=\inf\{t\in[0,T_{S^*}(z_0)):z(t)\notin W\}.
\end{align*}
Since $z_0\in W$ and $W$ is open, we have $\tau>0$, and $z(t)\in W$ for all $t \in [0, \tau)$. Hence, we have $y(t)<-c$ for $t \in [0, \tau)$. Moreover, since
$\tau<T_{S^*}(z_0)$, we obtain
\begin{align*}
    |x(t)|  = \frac{|g(z(t))|}{|y(t)|}  
    \leq \frac{c\delta_x}{|y(t)|} < \delta_x \leq \frac{1}{2}\quad \forall~ t \in [0, \tau).
\end{align*}
It follows from this that $y'(t) = x(t)-1 <-\frac{1}{2}$ and so $y(t) \leq y(0) -\frac{t}{2}$ for all $t \in [0, \tau)$. Letting $t \to \tau^{-}$ and using the continuity of $y(t)$ yields
\begin{align*}
    y(\tau)\leq y(0)-\frac{\tau}{2}<-c.
\end{align*}
Hence, $z(\tau)\in W$, contradicting the definition of $\tau$. Therefore, $\varphi_t z_0\in W$ for all $0\leq t<T_{S^*}(z_0)$.

All hypotheses of Theorem~\ref{thm:alpha criterion} are satisfied. Therefore, every point $p^*=(0,y^*)$ with $y^*<0$ is a $v$-transient center for the observable $v(x,y)=x$.

\end{example}

\begin{figure}[htbp]
\centering
\subfloat[]{%
\resizebox*{6.4cm}{!}{\includegraphics{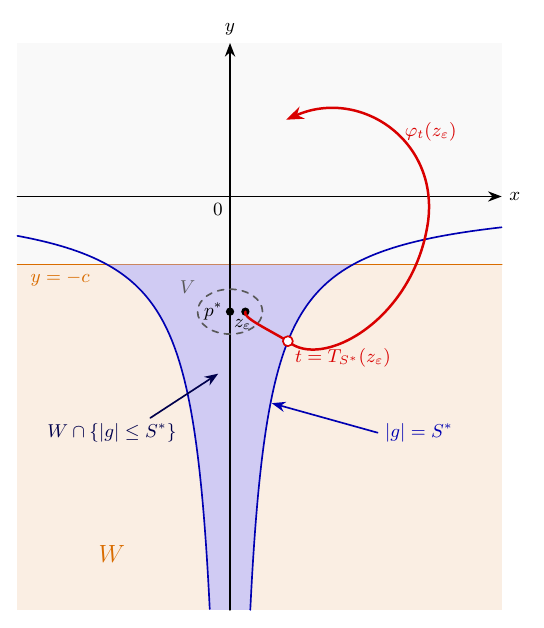}}}\hspace{1pt}
\subfloat[]{%
\resizebox*{6.4cm}{!}{\includegraphics{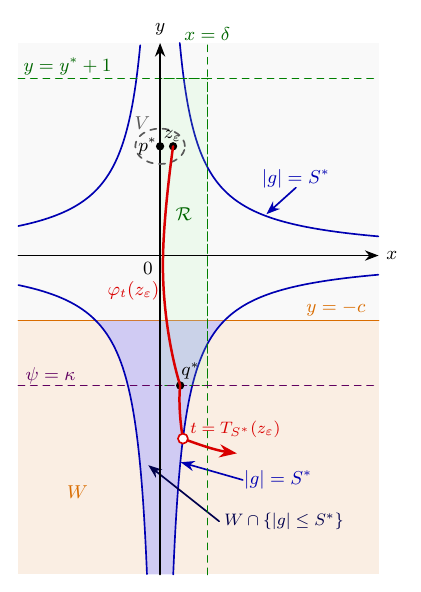}}}
\caption{Schematic of the proofs used in (a) Example~\ref{example: planar} and (b) Example~\ref{example: transport}.
}
\label{fig:example_proof}
\end{figure}


Theorem~\ref{thm:alpha criterion} gives a useful escape criterion when the exact factorization $Dg=\alpha g$ is available. We now give a slightly more flexible version in which the growth of $g$ is controlled through $g^2$. This avoids working directly with $|g|$, which is not differentiable when $g=0$. The result is pointwise in the sense that it follows one nearby trajectory and uses a comparison inequality to force eventual escape from the slow regime.

\begin{theorem} \label{thm:comparison criterion}
Let $x^*\in\Xi$ and suppose that $g^2\in C^1(\mathbb{R}^n,\mathbb{R})$. Suppose there exist a constant $S^*>0$, an open set $W\subset\mathbb{R}^n$, and a continuous function $\chi:(0,(S^*)^2]\to (0,\infty)$ such that
\begin{enumerate}[({B}1)]
    \item for every neighborhood $V$ of $x^*$, there exists $z\in V\cap W$ such that $0<|g(z)|\leq S^*$;
    
    \item if $z\in W$ and $|g(z)|\leq S^*$, then $\varphi_t z\in W$ for all $0\leq t<T_{S^*}(z)$;
    
    \item for every $y\in W$ satisfying $0<|g(y)|\leq S^*$, we have 
    \begin{align*}
        D(g^2)(y)\geq \chi(g(y)^2).
    \end{align*}
\end{enumerate}
Then $x^*$ is a $v$-transient center.
\end{theorem}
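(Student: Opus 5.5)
By Theorem~\ref{thm:finite-time criterion}, it suffices to show that every neighborhood $V$ of $x^*$ contains a point $z_0$ with $T_{S^*}(z_0)<\infty$. The plan follows the proof of Theorem~\ref{thm:alpha criterion}, with the exponential estimate replaced by a scalar comparison argument for $H(t)=g(z(t))^2$.

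First, use (B1) to pick $z_0\in V\cap W$ with $0<|g(z_0)|\leq S^*$. Set $z(t)=\varphi_t z_0$ and $H(t)=g(z(t))^2$, so that $H(0)=h_0\in(0,(S^*)^2]$. Suppose, for contradiction, that $T_{S^*}(z_0)=\infty$. Then $|g(z(t))|\leq S^*$ for all $t\geq 0$, and (B2) gives $z(t)\in W$ for all $t\geq0$.

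Next, show that $H$ never returns to zero. Since $g^2\in C^1$, $H$ is $C^1$ with $H'(t)=D(g^2)(z(t))$. Whenever $0<H(t)\leq (S^*)^2$, (B3) gives $H'(t)\geq\chi(H(t))>0$. Let $t_1=\inf\{t>0: H(t)<h_0\}$. On $[0,t_1)$ we have $H\geq h_0>0$, so $H'>0$ there. Hence $H$ is increasing on $[0,t_1]$ by continuity, and so $H(t_1)\geq h_0$. At $t_1$ itself $H(t_1)>0$, so $H'(t_1)>0$, and $H$ stays above $h_0$ just after $t_1$. This contradicts the definition of $t_1$ unless $t_1=\infty$. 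Therefore $h_0\leq H(t)\leq (S^*)^2$ for all $t\geq 0$.

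Then close with the comparison step. Since $\chi$ is continuous and positive on the compact interval $[h_0,(S^*)^2]$, it has a minimum $c>0$ there. Thus $H'(t)\geq c$ for all $t\geq0$, and $H(t)\geq h_0+ct$, which eventually exceeds $(S^*)^2$. This contradicts $H\leq (S^*)^2$, so $T_{S^*}(z_0)<\infty$, and Theorem~\ref{thm:finite-time criterion} gives the conclusion. Explicitly, the escape occurs no later than $((S^*)^2-h_0)/c$. Equivalently, one could use the bound $t\leq\int_{h_0}^{(S^*)^2}dh/\chi(h)$, which is finite.

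I expect the main obstacle to be the middle step: checking that $H$ cannot decrease toward $0$, where (B3) no longer applies and $\chi$ may degenerate. The minimum of $\chi$ on $[h_0,(S^*)^2]$ is available only once this is settled. A small point is that $\chi$ is only defined on $(0,(S^*)^2]$. That is harmless, because $H$ stays in $[h_0,(S^*)^2]$ until the escape time.
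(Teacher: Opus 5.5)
Your proof is correct and follows essentially the same route as the paper: assume $T_{S^*}(z_0)=\infty$, use (B2)--(B3) to keep $H(t)=g(z(t))^2$ away from zero, then bound $\chi$ below on the compact interval $[h_0,(S^*)^2]$ to force linear growth past $(S^*)^2$. The only difference is that you show directly that $H$ never drops below $h_0$, whereas the paper shows that $H$ never hits $0$. Your version makes explicit the fact $H(t)\in[h_0,(S^*)^2]$, which the paper's final step uses without stating it.
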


\begin{proof}
Let $V$ be an arbitrary neighborhood of $x^*$. By (B1), there exists $z_0\in V\cap W$ such that $0<|g(z_0)|\leq S^*$. Write $z(t)=\varphi_t z_0$ and $X(t)=g(z(t))^2$. We claim that $T_{S^*}(z_0)<\infty$. Suppose, to the contrary, that $T_{S^*}(z_0)=\infty$. Then $ X(t)\leq (S^*)^2$ for all $t \geq 0$. We first show that $X(t)>0$ for all $t\geq 0$. Since $X(0)>0$, define $\tau=\inf\{t>0:X(t)=0\}$. Suppose that $\tau<\infty$. Then $ 0<X(t)\leq (S^*)^2$ for all $t \in [0, \tau)$. Moreover, $z(t)\in W$ for all $t\geq 0$ since $T_{S^*}(z_0)=\infty$ by (B2). Hence, assumption (B3) implies that 
\begin{align*}
    X'(t)=D(g^2)(z(t))\geq \chi(X(t))>0 \quad \forall~t \in [0, \tau).
\end{align*}
Thus, $X$ is strictly increasing on $[0,\tau)$, and therefore $X(t)\geq X(0)>0$ for $t \in [0, \tau)$. Letting $t\to \tau^{-}$ yields $ X(\tau)\geq X(0)>0$, which contradicts the definition of $\tau$. Therefore, no such $\tau$ exists.
Hence, $0<X(t)\leq (S^*)^2$ for all $t\geq 0$. Now (B2) and (B3) imply that
\begin{align*}
     X'(t)=D(g^2)(z(t))\geq \chi(X(t)) \quad \forall~t\geq 0.
\end{align*}
Since $X(0)>0$ and $\chi$ is continuous and positive on the compact interval $[X(0),(S^*)^2]$, there exists $k>0$ such that $  \chi(r)\geq k$ for $r\in [X(0),(S^*)^2]$. Thus, $X'(t)\geq k$ and so $X(t)\geq X(0)+k t$ for all $t \geq 0$. This yields $X(t)>(S^*)^2$ for sufficiently large $t$, contradicting the assumption that $T_{S^*}(z_0)=\infty$. Thus, $T_{S^*}(z_0)<\infty$. Since $V$ was arbitrary, Theorem~\ref{thm:finite-time criterion} implies that $x^*$ is a $v$-transient center.
\end{proof}


Theorem~\ref{thm:alpha criterion} is a special case of Theorem~\ref{thm:comparison criterion}. Indeed, if $Dg=\alpha g$ and $\alpha\geq m>0$ on the relevant slow region, then
\begin{align*}
    D(g^2)=2gDg=2\alpha g^2\geq 2m g^2.
\end{align*}
Thus, Theorem~\ref{thm:comparison criterion} applies by choosing $\chi(r)=2mr$. The advantage of Theorem~\ref{thm:comparison criterion} is that it does not require an exact factorization of $Dg$ by $g$. It is enough to prove that $g^2$ increases at a controlled positive rate while the trajectory remains in the slow regime.

The previous result forces escape by controlling the growth of $g^2$. We now state a related criterion in which the role of $g^2$ is replaced by an auxiliary scalar function. This is useful when $g$ is not monotone along trajectories, but some scalar quantity must increase while the trajectory remains in the slow regime.

\begin{theorem} \label{thm:auxiliary criterion}
Let $x^*\in\Xi$. Suppose there are constants $S^*>0$, $c>0$ and $m \in \mathbb{R}$, an open set $W\subset\mathbb{R}^n$, and a function $\Lambda\in C^1(W,\mathbb{R})$ such that
\begin{enumerate}[({C}1)]
    \item for every neighborhood $V$ of $x^*$, there exists $z\in V\cap W$ such that $|g(z)|\leq S^*$;

    \item if $z\in W$ and $|g(z)|\leq S^*$, then $\varphi_t z\in W$ for all $0\leq t<T_{S^*}(z)$;

    \item for all $y\in W$ satisfying $|g(y)|\leq S^*$, we have
    \begin{align*}
        \Lambda(y) \leq m \quad \text{and} \quad D\Lambda(y)\geq c.
    \end{align*}
\end{enumerate}
Then $x^*$ is a $v$-transient center.
\end{theorem}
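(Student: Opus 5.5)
By Theorem~\ref{thm:finite-time criterion}, it suffices to show that every neighborhood $V$ of $x^*$ contains a point $z_0$ with $T_{S^*}(z_0)<\infty$. The idea is to use $\Lambda$ as a bounded clock: it increases at rate at least $c$ while the trajectory is slow and stays in $W$, yet it can never exceed $m$ there, so the slow phase must end in finite time.

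Fix an arbitrary neighborhood $V$ of $x^*$. Using (C1), choose $z_0\in V\cap W$ with $|g(z_0)|\leq S^*$, and write $z(t)=\varphi_t z_0$. Argue by contradiction and suppose $T_{S^*}(z_0)=\infty$.

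Then $|g(z(t))|\leq S^*$ for all $t\geq 0$. By (C2), $z(t)\in W$ for all $t\geq 0$. Define $L(t)=\Lambda(z(t))$ for $t\geq 0$. Since $\Lambda\in C^1(W,\mathbb{R})$ and $z(t)$ stays in the open set $W$, the chain rule gives
\begin{align*}
L'(t)=\nabla\Lambda(z(t))\cdot f(z(t))=D\Lambda(z(t)).
\end{align*}
Because every $z(t)$ lies in $W$ and satisfies $|g(z(t))|\leq S^*$, hypothesis (C3) yields $L'(t)\geq c$ for all $t\geq 0$. Integrating gives $L(t)\geq L(0)+ct$.

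On the other hand, (C3) also gives $L(t)\leq m$ for all $t\geq 0$. These two bounds are incompatible for $t>(m-L(0))/c$, which is a contradiction. Hence $T_{S^*}(z_0)<\infty$.

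Since $V$ was arbitrary, Theorem~\ref{thm:finite-time criterion} with $S^*$ shows that $x^*$ is a $v$-transient center. As a by-product, the argument gives the explicit exit-time bound $T_{S^*}(z_0)\leq (m-\Lambda(z_0))/c$.

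There is no genuine obstacle here. The only delicate points are two. First, the chain rule for $L$ needs the trajectory to remain in $W$, which is exactly what (C2) supplies under the contradiction hypothesis. Second, no condition $g(z_0)\neq 0$ is needed, unlike in Theorems~\ref{thm:alpha criterion} and~\ref{thm:comparison criterion}, because the growth is driven by $\Lambda$ rather than by $g$ itself.
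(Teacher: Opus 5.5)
Your proposal is correct and follows essentially the same argument as the paper: pick $z_0$ via (C1), assume $T_{S^*}(z_0)=\infty$, use (C2) to keep the trajectory in $W$, and derive a contradiction between $\Lambda(\varphi_t z_0)\le m$ and $\Lambda(\varphi_t z_0)\ge \Lambda(z_0)+ct$ from (C3). Your exit-time bound $T_{S^*}(z_0)\le (m-\Lambda(z_0))/c$ is also valid, but it does not come out of the contradiction argument itself; you get it by running the same estimate directly on $[0,T_{S^*}(z_0))$, where (C2) and (C3) already apply.
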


\begin{proof}
We again apply Theorem~\ref{thm:finite-time criterion}. Let $V$ be a neighborhood of $x^*$. By (C1), we choose $z_0\in V\cap W$ such that $|g(z_0)|\leq S^*$. We claim that $T_{S^*}(z_0)<\infty$. Suppose, to the contrary, that $T_{S^*}(z_0)=\infty$. This implies that $|g(\varphi_t z_0)|\leq S^*$ for all $t \geq 0$, and we also have $\varphi_t z_0\in W$ for all $t \geq 0$ by assumption (C2). In other words, $\varphi_t z_0\in W\cap\{|g|\leq S^*\}$ for all $t \geq 0$. Define $Y(t) = \Lambda(\varphi_t z_0)$. Using (C3), we have $Y(t) \leq m$ and 
\begin{align*}
    Y'(t) = D\Lambda(\varphi_t z_0) \geq c \quad \forall~t\geq 0.
\end{align*}
Thus, $Y(t) \geq Y(0) + ct$ which implies that $Y(t) \to \infty$ as $t \to \infty$, a contradiction. Therefore, we must have $T_{S^*}(z_0)<\infty$, completing the proof.
\end{proof}

Note that assumption (C1) differs from assumptions (A1) of Theorem~\ref{thm:alpha criterion} and (B1) of Theorem~\ref{thm:comparison criterion} in that it permits the selected point $z$ to satisfy $g(z)=0$. This is possible because the proof of Theorem~\ref{thm:auxiliary criterion} does not require the observable speed $g$ to be initially nonzero. Moreover, under the hypotheses of Theorem~\ref{thm:auxiliary criterion}, the point $x^*$ cannot belong to $W$. Indeed, since $x^*\in\Xi$, we have $g(\varphi_t x^*)=0$ for all $t\in\mathbb{R}$. Hence, $T_{S^*}(x^*)=\infty$. Suppose that $x^*\in W$. 
Applying assumptions (C2) and (C3) along the forward trajectory of $x^*$ leads to the same contradiction between the upper bound on $\Lambda$ and the strict growth of $\Lambda$ obtained in the proof of Theorem~\ref{thm:auxiliary criterion}. Thus, $x^* \notin W$. On the other hand, assumption (C1) implies that $x ^* \in \overline{W \cap \{|g| \leq S^*\}}$. In particular, $x^*\in\overline{W}$. Since $W$ is open and $x^*\notin W$, it follows that $x^*\in\partial W$. Thus, the set $W$ approaches the candidate point $x^*$ without containing it. This observation will become apparent in Section~\ref{sec:application}, where Theorem~\ref{thm:auxiliary criterion} is applied directly to an epidemic model.


The escape criteria given in the theorems above require that the nearby initial condition already lies in a region where a suitable growth estimate is valid. In many applications, however, an initial point chosen arbitrarily close to a candidate $x^*\in\Xi$ may not immediately satisfy this condition. It might first have to traverse an initial segment of phase space before entering the region where the growth estimate holds. The following elementary lemma isolates this preliminary transport step. It does not itself give a criterion for finding transient centers but instead provides a way to show that a trajectory that remains $S^*$-slow must reach a prescribed level set of an auxiliary function $\psi$ in finite time. Once that level set is attained, one can apply one of the immediate escape criteria to conclude that the trajectory eventually leaves the slow regime. Thus, the lemma serves as a bridge that extends the applicability of Theorems~\ref{thm:alpha criterion}, \ref{thm:comparison criterion}, and \ref{thm:auxiliary criterion} to situations where the required growth estimates are only guaranteed away from the candidate point.

\begin{lemma}\label{lem:transport}
Let $S^*>0$, $\mathcal R\subset\mathbb{R}^n$ be open, 
$\psi\in C^1(\mathbb{R}^n,\mathbb{R})$, and $\kappa\in\mathbb{R}$. For
$z\in\mathbb{R}^n$, define
\begin{align*}
     \tau_\kappa(z)=\inf\{t\geq 0:\psi(\varphi_t z)\geq \kappa\}
\end{align*}
with the convention that $\inf\varnothing=\infty$. If
\begin{enumerate}[({L}1)]
\item there is $z_0\in\mathcal R$ satisfying  $|g(z_0)|\leq S^*$ and $\psi(z_0)<\kappa$,

\item $\varphi_t z_0\in\mathcal R$ for all  $0\leq t<\min\{T_{S^*}(z_0),\tau_\kappa(z_0)\}$, and

\item there exists $c>0$ such that $ D\psi(y)=\nabla\psi(y)\cdot f(y)\geq c$ for all $y\in\mathcal R$ satisfying $|g(y)|\leq S^*$ and $\psi(y)<\kappa$,
\end{enumerate}
then
\begin{align*}
     \min\{T_{S^*}(z_0),\tau_\kappa(z_0)\} \leq \frac{\kappa-\psi(z_0)}{c}.
\end{align*}
\end{lemma}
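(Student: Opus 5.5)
The plan is a direct comparison argument along the single trajectory $z(t)=\varphi_t z_0$. Set $\sigma=\min\{T_{S^*}(z_0),\tau_\kappa(z_0)\}$ and $Y(t)=\psi(z(t))$, so that $Y'(t)=D\psi(z(t))$ by the chain rule. I will argue by contradiction: suppose $\sigma>t_1:=(\kappa-\psi(z_0))/c$. Note that $t_1>0$ by (L1), since $\psi(z_0)<\kappa$.

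The first step is to verify that hypothesis (L3) applies at every time $t\in[0,t_1]$. For $t<\sigma$ we have three facts. Since $t<T_{S^*}(z_0)$, the definition of $T_{S^*}$ as an infimum gives $|g(z(t))|\le S^*$. Since $t<\tau_\kappa(z_0)$, the definition of $\tau_\kappa$ gives $\psi(z(t))<\kappa$. Finally, (L2) gives $z(t)\in\mathcal R$. Hence (L3) yields $Y'(t)\ge c$ for all $t\in[0,\sigma)$, and in particular on $[0,t_1]$, because $t_1<\sigma$.

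The second step is to integrate this bound. Doing so gives $Y(t_1)\ge \psi(z_0)+ct_1=\kappa$. But $t_1<\tau_\kappa(z_0)$ forces $\psi(z(t_1))<\kappa$, which is a contradiction. Therefore $\sigma\le t_1$, which is the claimed bound.

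The only delicate point is the endpoint bookkeeping. One must check that the strict inequalities coming from the infima hold on the half-open interval $[0,\sigma)$, and that the contradiction is drawn at an interior time $t_1<\sigma$ rather than at $\sigma$ itself. Drawing it at an interior time means no continuity limit at $\sigma$ is needed. The case $\sigma=\infty$ is covered by the same argument, since then $t_1<\sigma$ automatically. No earlier theorem is needed beyond the chain rule $Y'=D\psi(z(t))$, which holds because $\psi\in C^1$ and the solution is differentiable by (H1).
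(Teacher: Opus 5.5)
Your proof is correct and follows essentially the same contradiction argument as the paper. You assume the minimum exceeds $(\kappa-\psi(z_0))/c$, use the infimum definitions together with (L2) to invoke (L3) before that minimum, and integrate to contradict $\psi<\kappa$; the only cosmetic difference is that you evaluate at $t_1=(\kappa-\psi(z_0))/c$ itself to get $\psi\geq\kappa$, whereas the paper picks a time strictly between $t_1$ and the minimum to get $\psi>\kappa$.
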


\begin{proof}
Set $t_*=\min\{T_{S^*}(z_0),\tau_\kappa(z_0)\}$ and suppose for contradiction that
$t_*>\frac{\kappa-\psi(z_0)}{c}$. Choose $t\geq 0$ such that $\frac{\kappa-\psi(z_0)}{c}<t<t_*$. For every $\tau\in[0,t]$, we have $\tau<t_*$. This implies that $\tau<T_{S^*}(z_0)$ and $\tau<\tau_\kappa(z_0)$, and consequently, $|g(\varphi_\tau z_0)|\le S^*$ and $\psi(\varphi_\tau z_0)<\kappa$. By (L2), we know that $\varphi_\tau z_0\in\mathcal R$. Thus, (L3) yields
\begin{align*}
    \frac{d}{d\tau}\psi(\varphi_\tau z_0)=D\psi(\varphi_\tau z_0)\ge c \quad \forall~ \tau\in[0,t].
\end{align*}
Integrating from $0$ to $t$, we obtain $\psi(\varphi_t z_0)\ge \psi(z_0)+ct>\kappa$. This contradicts the fact that $t<\tau_\kappa(z_0)$. Hence, the assumption was false, and we conclude that $t_*\leq \frac{\kappa-\psi(z_0)}{c}$, completing the proof.
\end{proof}

The lemma is designed for situations where a nearby trajectory starts in a neighborhood of a candidate point $x^*\in\Xi$ but not yet inside a region $W$ where the growth estimates of Theorems~\ref{thm:alpha criterion}, \ref{thm:comparison criterion}, or \ref{thm:auxiliary criterion} are valid. For such a trajectory, we can choose an auxiliary function $\psi$ whose time derivative is bounded below by a positive constant as long as the trajectory remains slow. The lemma then guarantees that within a finite time the trajectory must either leave the $S^*$-slow regime, thereby giving the required finite-time escape, or reach a level set where $\psi$ attains a value that ensures the trajectory has entered the region $W$. Once inside $W$, the hypotheses of Theorem~\ref{thm:alpha criterion}, Theorem~\ref{thm:comparison criterion}, or Theorem~\ref{thm:auxiliary criterion} become applicable, and the trajectory is forced to exit the $S^*$-slow regime in finite time. Thus, by combining the lemma with any of these criteria, we can prove that $x^*$ is a $v$-transient center even when the growth estimates are not initially available arbitrarily close to $x^*$. The next example shows how we apply this strategy.

\begin{example} \label{example: transport}
We now return to the planar system in Example~\ref{example: planar} and consider the remaining candidate points $p^*=(0,y^*)$ with $y^*>0$. Near such a point, the function $\alpha$ in \eqref{eq:alpha-example} is negative since
\begin{align*}
    \alpha(x,y)=-\frac{y^2-x+1}{y}<0
\end{align*}
for $y>0$ and $x$ sufficiently small. Thus, the observable speed $g=Dv$ is not initially growing in the sense required by Theorem~\ref{thm:alpha criterion}. Nevertheless, trajectories near the line $x=0$ move downward because $y'=x-1<0$ whenever $x$ is sufficiently small. The idea is therefore to use Lemma~\ref{lem:transport} to transport a nearby slow trajectory from the region $\{y>0\}$ into a region $\{y<0\}$ where Theorem~\ref{thm:alpha criterion} applies. The arguments below are best understood by consulting Figure~\ref{fig:example_proof}(b).

Fix $p^*=(0,y^*)$ with $y^*>0$. Let $c>0$, $\delta\in(0,1/2]$ and set $S^*=c\delta$. We shall transport nearby points from the region $\{(x,y): y>0\}$ into the region
\begin{align*}
    W=\{(x,y):y<-c\},
\end{align*}
where the estimates used in Example~\ref{example: planar} apply. Thus, any point $q^*\in W$ with $0<|g(q^*)|\leq S^*$ leaves the $S^*$-slow regime in finite time. 


We now show that such a point $q^*$ is reached from arbitrarily close to
$p^*=(0,y^*)$, unless the trajectory has already escaped. Let
\begin{align*}
    \psi(x,y)=-y \quad \text{and} \quad \kappa=2c,
\end{align*}
so that the level set $\psi=\kappa$ is the line $y=-2c$. Define the region
\begin{align*}
    \mathcal R=\{(x,y):0<x<\delta,~ -2c<y<y^*+1\}.
\end{align*}
Note that on $\mathcal R$, we have
\begin{align*}
    D\psi(x,y)=1-x\geq 1-\delta\geq \frac{1}{2}.
\end{align*}
Let $V$ be an arbitrary neighborhood of $p^*$. We choose $z_\varepsilon=(\varepsilon,y^*)\in V\cap \mathcal R$ with $\varepsilon>0$ sufficiently small so that
\begin{align}
    0<|g(z_\varepsilon)|=\varepsilon y^*<S^* 
    \quad \text{and} \quad 
    \varepsilon e^{4c(y^*+2c)}<\frac{\delta}{2}. \label{eq:varepsilon_choice}
\end{align}
We claim that the trajectory starting from $z_\varepsilon$ remains in $\mathcal R$ until it either leaves the $S^*$-slow regime or reaches the level $y=-2c$.

We first show that the trajectory cannot leave $\mathcal R$ before either escaping the $S^*$-slow regime or reaching the level $y=-2c$, except possibly through the boundary $x=\delta$. Since $x(0)=\varepsilon>0$ and $x'=-xy$, we have
\begin{align*}
     x(t)=\varepsilon\exp\left(-\int_0^t y(s)~ds\right)>0
\end{align*}
for as long as the solution exists. Hence, the trajectory cannot cross the boundary $x=0$. Moreover, before the level $y=-2c$ is reached, the trajectory cannot exit through the lower boundary $y=-2c$. Also, while the trajectory remains in $\mathcal R$, we have $x(t)<\delta\leq \frac{1}{2}$, and therefore $y'(t)=x(t)-1\leq -\frac{1}{2}$. This implies that $y(t)$ is decreasing as long as the trajectory remains in $\mathcal R$. Since $y(0)=y^*<y^*+1$, the trajectory cannot exit through the upper boundary $y=y^*+1$. Consequently, if the trajectory leaves $\mathcal R$ before either escaping the $S^*$-slow regime or reaching the level $y=-2c$, then it must leave through the boundary $x=\delta$.

Let $t_1$ be the first time at which $x(t_1)=\delta$, assuming that this time occurs before the trajectory leaves the $S^*$-slow regime and before it reaches the level $y=-2c$, that is, $t_1<\min\{T_{S^*}(z_\varepsilon),\tau_\kappa(z_\varepsilon)\}$. This means that the trajectory has not reached the level $y=-2c$ at any time $t \in [0, t_1]$, and so $y(t) > -2 c$ for $t \in [0, t_1]$. On $[0,t_1]$, we therefore have $x'(t)=-x(t)y(t)\leq 2cx(t)$, which implies $x(t)\leq \varepsilon e^{2ct}$ for all $t \in [0, t_1]$. Also, since $t_1$ is the first time at which $x(t)=\delta$, we have $x(t)<\delta\leq \frac{1}{2}$ and so $y'(t)=x(t)-1\leq -\frac{1}{2}$ for $t \in [0, t_1)$. Integrating yields $y(t_1)\leq y^*-\frac{t_1}{2}$. Since $y(t_1)>-2c$, we obtain $t_1<2(y^*+2c)$. Consequently,
\begin{align*}
     x(t_1)\leq \varepsilon e^{2ct_1} < \varepsilon e^{4c(y^*+2c)} < \frac{\delta}{2}
\end{align*}
using \eqref{eq:varepsilon_choice}. This contradicts $x(t_1)=\delta$. Therefore, the trajectory cannot leave $\mathcal R$ before either escaping the $S^*$-slow regime or reaching $y=-2c$.

The hypotheses of Lemma~\ref{lem:transport} are therefore satisfied for the initial point $z_\varepsilon$, with derivative lower bound $1-\delta$. Hence,
\begin{align*}
    \min\{T_{S^*}(z_\varepsilon),\tau_\kappa(z_\varepsilon)\} 
    \leq \frac{\kappa-\psi(z_\varepsilon)}{1-\delta} 
    = \frac{2c+y^*}{1-\delta} <\infty.
\end{align*}
We claim that $T_{S^*}(z_\varepsilon)<\infty$. If $T_{S^*}(z_\varepsilon)=\infty$, then the preceding inequality implies that $\tau_\kappa(z_\varepsilon)<\infty$. Let $q^*=\varphi_{\tau_\kappa(z_\varepsilon)}z_\varepsilon$. By continuity of $\psi(\varphi_tz_\varepsilon)$ and the definition of $\tau_\kappa$, we have $\psi(q^*)=\kappa$, and hence the $y$-coordinate of $q^*$ is $-2c$. Thus, $q^*\in W$. Moreover, since $T_{S^*}(z_\varepsilon)=\infty$, the trajectory has not left the $S^*$-slow regime, and so $|g(q^*)|\leq S^*$. Also, $x(t)>0$ along the trajectory, and therefore $|g(q^*)|=|x(\tau_\kappa)y(\tau_\kappa)|>0$. Hence, $0<|g(q^*)|\leq S^*$. By the same estimates used in Example~\ref{example: planar} on the region $W=\{(x,y):y<-c\}$, we have $T_{S^*}(q^*) < \infty$. Since $q^*=\varphi_{\tau_\kappa(z_\varepsilon)}z_\varepsilon$, it follows that the original trajectory starting from $z_\varepsilon$ also leaves the $S^*$-slow regime in finite time. This contradicts $T_{S^*}(z_\varepsilon)=\infty$. Since $V$ was arbitrary, every neighborhood of $p^*$ contains a point $z_\varepsilon$ such that $T_{S^*}(z_\varepsilon)<\infty$ and, Theorem~\ref{thm:finite-time criterion} implies that $p^*$ is a $v$-transient center.

\end{example}


Observe that a common hypothesis in all of the theorems we have proven so far is the following invariance assumption:
\begin{align*}
    z\in W, \quad |g(z)| \leq S^* \implies \varphi_t z \in W \quad \forall~t \in [0, T_{S^*} (z)).
\end{align*}
This means that as long as the trajectory has not yet entered the fast regime, it must remain inside $W$. Without this, our proofs only show that the trajectory cannot remain forever in $W \cap \{|g| \leq S^*\}$. There are two possibilities: either $|g| > S^*$ which is the desired escape, or $\varphi_tz \notin W$ while still having $|g(\varphi_t z)| \leq S^*$. The second possibility does not prove the finite-time escape from the $S^*$-slow regime. It only proves exit from the auxiliary region $W$. In applications, verifying this invariance assumption may be nontrivial. If $W$ is chosen merely as a bounded neighborhood of the candidate point, a trajectory may exit through its boundary while still satisfying $|g| \leq S^*$. Then our results do not apply unless we can show that such slow exit from $W$ is impossible. Nevertheless, our theorems are useful when $W$ is chosen so that the slow part of the dynamics is positively invariant, or at least invariant up to the first exit from the $S^*$-slow regime. For instance, one may, as in Examples~\ref{example: planar} and~\ref{example: transport}, choose $W$ to be defined by inequalities whose boundary vector field points inward while $|g| \leq S^*$.

We conclude this section by explaining how the preceding escape criteria can be used to establish reachability. By Theorem~\ref{thm:reachable finite-time criterion}, it is not enough to prove forward escape for one nearby point and backward escape for another. One must find, in every neighborhood of $x^*$, a single point $z$ such that $T_{S^*}(z)<\infty$ and $\widehat T_{S^*}(z)<\infty$. Thus, to obtain reachable $v$-transient centers from the preceding results, we apply the same type of escape argument in both time directions. The forward escape is proved using the original flow $\varphi_t$, while the backward escape is proved by applying the corresponding criterion to the reversed flow $\widehat\varphi_t z=\varphi_{-t}z$. In practice, this means that the hypotheses of Theorems~\ref{thm:alpha criterion}, \ref{thm:comparison criterion}, \ref{thm:auxiliary criterion}, or Lemma~\ref{lem:transport} must be verified twice: once for the original system and once for the reversed system, with both arguments applying to the same chosen initial point $z$. This distinction is important because a point may be a $v$-transient center for the forward system and also for the reversed system without being a reachable $v$-transient center (see, for instance, Example 4.5 in \cite{liu2023framework}). Reachability requires the forward and backward finite-time escape mechanisms to be realized by the same nearby trajectory.

\section{Application to an epidemic model with delay of stability loss} \label{sec:application}

We now apply the criteria developed in the previous section to an epidemic model studied by Li et al. \cite{li2016turning}. Their work considers an SIR epidemic model in which the total host population varies slowly. The model keeps track of the susceptible population $S$, the infectious population $I$, and the total population $N$. The recovered population $R$ is determined by $R= N-S-I$.

\subsection{The epidemic model}


In the absence of disease, the total population is assumed to satisfy $N'=\varepsilon q(N)$ where $0<\varepsilon\ll 1$. Thus, $\varepsilon$ represents a small intrinsic growth rate of the host population. The function $q$ describes density-dependent population growth with the assumption that $q$ is concave and satisfies $q''(N)<0$ and $q(0)=q(N_*)=0$ for some $N_*>0$. Consequently, $q(N)>0$ for $0<N<N_*$, so $N_*$ plays the role of the carrying capacity. The disease transmission is described by the incidence term $h(S,N)I$ where the function $h$ is smooth, increasing in $S$, and satisfies $h(0,N)=0$. The parameters and functions appearing in the model are summarized in Table~\ref{tab:Li-parameters}. Defining $a=\zeta+\mu+\gamma$, the model of Li et al.~\cite{li2016turning} is
 \begin{align}\label{eq:Li-model}
 \left\{\begin{aligned}
     S' &= \zeta N+\varepsilon q(N)-h(S,N)I-(\zeta+p)S,\\
     I' &= h(S,N)I-aI,\\
     N' &= \varepsilon q(N)-\mu I.
 \end{aligned} \right.
 \end{align}
The biologically feasible region is
\begin{align*}
    \mathcal D=\{(S,I,N)\in\mathbb R^3:~S\geq0,~I\geq0,~N\geq0,~S+I\leq N\leq N_*\}.
\end{align*}
This region is positively invariant for the system \eqref{eq:Li-model}. The disease-free plane $\{(S, I, N) : I=0\}$ is also invariant, reflecting the biological fact that the disease cannot arise if no infectious individuals are initially present.

\begin{table}[t]
    \centering
    \begin{tabular}{|p{0.15\textwidth}|p{0.75\textwidth}|}
        \hline
        \multicolumn{1}{|c|}{Symbol}
        & \multicolumn{1}{c|}{Description} \\
        \hline
        $\varepsilon$
        & small positive parameter determining the slow time scale of host population growth \\
        \hline
        $q(N)$
        & density-dependent host population growth function, assumed to satisfy $q''(N)<0$, $q(0)=q(N_*)=0$, and $q(N)>0$ for $0<N<N_*$ \\
        \hline
        $N_*$
        & carrying capacity of the host population \\
        \hline
        $h(S,N)$
        & disease transmission function, assumed to be smooth, increasing in $S$, and to satisfy $h(0,N)=0$ \\
        \hline
        $\zeta$
        & natural death rate of the host population \\
        \hline
        $p$
        & vaccination rate of susceptible individuals \\
        \hline
        $\gamma$
        & recovery rate of infectious individuals \\
        \hline
        $\mu$
        & disease-induced death rate \\
        \hline
        $a$
        & total removal rate from the infectious class, given by
        $a=\zeta+\mu+\gamma$ \\
        \hline
    \end{tabular}
    \caption{Parameters and functions appearing in  model~\eqref{eq:Li-model}.}
    \label{tab:Li-parameters}
\end{table}

Restricting \eqref{eq:Li-model} to the disease-free plane gives
\begin{align}\label{eq:disease-free-reduced}
\left\{\begin{aligned}
    S' &= \zeta N+\varepsilon q(N)-(\zeta + p)S,\\
    N' &= \varepsilon q(N).
\end{aligned}\right.
\end{align}
Thus, for $0<\varepsilon\ll1$, the total population $N$ evolves on a slow time scale. In the singular limit $\varepsilon=0$, the disease-free dynamics have the critical manifold
\begin{align*}
    Z_0 = \left\{(S,0,N): S=\frac{\zeta N}{\zeta + p}\right\}.
\end{align*}
The transverse dynamics in the infectious direction are governed by the factor $h(S,N)-a$ since  $I'=(h(S,N)-a)I$. Therefore, along $Z_0$, the stability of the disease-free state in the infectious direction is determined by
\begin{align*}
    \lambda(N) =h\left(\frac{\zeta N}{\zeta + p},N\right)-a.
\end{align*}
Following \cite{li2016turning}, it is assumed that $h\left(\frac{\zeta N}{\zeta + p},N\right)$ is increasing in $N$, and that there exists a unique $N_c\in(0,N_*)$ such that $ h\left(\frac{\zeta N_c}{\zeta + p},N_c\right)=a$. Writing $S_c=\frac{\zeta}{\zeta + p}N_c$, the point $(S_c,0,N_c)$ is the turning point. For $N<N_c$, we have $\lambda(N)<0$, so small infectious perturbations decay. For $N>N_c$, we have $\lambda(N)>0$, so small infectious perturbations grow. Hence, the disease-free critical manifold changes from transversely attracting to transversely repelling as the disease-free dynamics pass through the turning point.

This change of transverse stability is the source of delay of stability loss. A trajectory with a small infectious population may be attracted toward the disease-free manifold while $\lambda(N)<0$. As the slow disease-free dynamics carry the trajectory past the turning point, $\lambda(N)$ becomes positive. However, the trajectory does not necessarily leave the disease-free manifold immediately. Instead, it may remain close to the disease-free manifold for a long time before the infectious population grows rapidly. This phenomenon has been studied in the context of dynamical bifurcations and singular perturbation problems. In particular, Neishtadt \cite{neishtadt2009stability} studied delay of stability loss for dynamical bifurcations, while W. Liu \cite{liu2000exchange,liu2006geometric} developed exchange-lemma methods for singular perturbation problems with turning points.

In the epidemiological interpretation, the long portion of the trajectory near the disease-free manifold corresponds to an interepidemic period, while the rapid departure from the disease-free manifold corresponds to an epidemic outbreak. Li et al. \cite{li2016turning} used this mechanism to prove the existence of large-amplitude relaxation oscillations under suitable parameter conditions. 

From the perspective of transient centers, delay of stability loss provides a natural setting in which long-lasting and slowly varying transient dynamics can occur. In the present model, trajectories may remain near the disease-free manifold for a long time before eventually departing from it. This long residence time corresponds to slow transient behavior with respect to the observable of interest, while the eventual departure corresponds to finite-time escape from a prescribed slow regime. Thus, although delay of stability loss is a more specific phenomenon in slow-fast systems, the theory of transient centers and reachable transient centers provides a useful viewpoint for describing the transient behavior it produces near the disease-free manifold. We now make this viewpoint precise using the observable $v(S,I,N)=I$.


\subsection{\texorpdfstring{$v$}{v}-transient centers}

We first prove a local result near the disease-free equilibrium at carrying capacity $E_+=(S_*,0,N_*)$, where $S_*=\frac{\zeta}{\zeta + p} N_*$. Note that this point is stable for the dynamics restricted to the disease-free plane $\{I=0\}$. However, by the turning point assumption, it is unstable in the transverse infectious direction. Indeed, since $h\left(\frac{\zeta N}{\zeta + p},N\right)$ is increasing in $N$ and crosses the value $a$ at $N=N_c\in(0,N_*)$, we have
\begin{align*}
    h(S_*,N_*)-a = h\left(\frac{\zeta N_*}{\zeta + p},N_*\right)-a >0.
\end{align*}
Thus, nearby trajectories with small $I>0$ experience growth in the infectious direction once they remain sufficiently close to $E_+$. Throughout, we assume that $\varepsilon>0$.

\begin{lemma} \label{lem:estimates}
Let $H(S,N)=h(S,N)-a$,
\begin{align*}
    F(S,N) =
    \begin{bmatrix}
        \zeta N+\varepsilon q(N)-(\zeta + p)S\\
        \varepsilon q(N)
    \end{bmatrix}, 
    \quad \text{and} \quad  
    Q(S,N) =
    \begin{bmatrix}
        -h(S,N)\\
        -\mu
    \end{bmatrix}.
\end{align*}
There exist positive constants $\eta$, $b$, $\beta$, a neighborhood $\Omega\subset\mathbb R^2$ of $(S_*,N_*)$, and a function $\mathcal{L}\in C^1(\mathbb R^2,\mathbb R)$ such that
\begin{enumerate}[(a)]
    \item  $H(S,N)\geq 2\eta$ for all $(S,N)\in\overline{\Omega}$,
    \item $\nabla \mathcal{L}(S,N)\cdot F(S,N)\leq -b$ for all $(S,N)\in\partial\Omega$, and 
    \item $\left| \nabla \mathcal{L}(S,N)\cdot Q(S,N) \right| \leq \beta$ for all $(S,N)\in\overline{\Omega}$.
\end{enumerate}
\end{lemma}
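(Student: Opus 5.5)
The plan is to take $\mathcal{L}$ to be a quadratic Lyapunov function for the disease-free planar system $(S,N)'=F(S,N)$ at its equilibrium $(S_*,N_*)$, and to take $\Omega$ to be a small sublevel set of $\mathcal{L}$. First I check that $(S_*,N_*)$ is a hyperbolic sink of $F$. We have $F(S_*,N_*)=0$ because $q(N_*)=0$ and $\zeta N_*=(\zeta+p)S_*$. The Jacobian is
\begin{align*}
    DF(S_*,N_*)=\begin{bmatrix} -(\zeta+p) & \zeta+\varepsilon q'(N_*)\\ 0 & \varepsilon q'(N_*)\end{bmatrix}.
\end{align*}
Since $q$ is strictly concave with $q(0)=q(N_*)=0$, we get $q'(N_*)<0$. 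Both eigenvalues $-(\zeta+p)$ and $\varepsilon q'(N_*)$ are therefore negative, because $\varepsilon>0$. Call this matrix $A$. I then solve the Lyapunov equation $A^\top P+PA=-I_2$ for a symmetric positive definite matrix $P$, and set
\begin{align*}
    \mathcal{L}(S,N)=(S-S_*,N-N_*)\,P\,(S-S_*,N-N_*)^\top .
\end{align*}
This function is smooth on $\mathbb{R}^2$, so certainly $C^1$. Writing $w=(S-S_*,N-N_*)$, we have
\begin{align*}
    \nabla\mathcal{L}\cdot F = w^\top(A^\top P+PA)w+O(\|w\|^3)=-\|w\|^2+O(\|w\|^3),
\end{align*}
and the remainder is controlled using the $C^1$ regularity of $F$ (in fact $F$ is $C^2$ if $q$ is).

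Next I fix $r_0>0$ such that $\nabla\mathcal{L}\cdot F\le -\tfrac12\|w\|^2$ whenever $\|w\|\le r_0$. I also use that $H(S_*,N_*)=h(S_*,N_*)-a>0$, which is shown just before the lemma, together with continuity of $h$. Shrinking $r_0$ if needed, I can arrange $H\ge \tfrac12 H(S_*,N_*)$ on $\{\|w\|\le r_0\}$. I then set $\eta=\tfrac14 H(S_*,N_*)$. For a small $\rho>0$ I define
\begin{align*}
    \Omega=\{\mathcal{L}<\rho\}.
\end{align*}
Then $\overline\Omega=\{\mathcal{L}\le\rho\}$ is an ellipse, contained in $\{\|w\|\le r_0\}$ once $\rho\le\lambda_{\min}(P)r_0^2$. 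This choice gives (a) immediately. It also makes $\overline\Omega$ compact, which is what (c) needs.

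For (b), $\partial\Omega=\{\mathcal{L}=\rho\}$, and on this set $\|w\|^2\ge\rho/\lambda_{\max}(P)$. So on $\partial\Omega$ we have
\begin{align*}
    \nabla\mathcal{L}\cdot F\le-\tfrac12\|w\|^2\le -\frac{\rho}{2\lambda_{\max}(P)}=:-b<0.
\end{align*}
For (c), $\nabla\mathcal{L}\cdot Q=-2(Pw)\cdot(h(S,N),\mu)$ is continuous. It is therefore bounded on the compact set $\overline\Omega$, and I take $\beta$ to be that bound plus one, which keeps $\beta$ positive.

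I expect the main obstacle to be the uniform negativity in (b). It requires that the boundary of $\Omega$ stays a fixed positive distance from the equilibrium, so that $-\|w\|^2$ is bounded away from zero there. It also requires a careful quantitative handling of the higher-order remainder. The sublevel-set choice settles both points. A minor caveat is that the constants depend on $\varepsilon$, since the $N$-eigenvalue $\varepsilon q'(N_*)$ is small. The lemma only asks for existence of constants at fixed $\varepsilon>0$, so this dependence is acceptable.
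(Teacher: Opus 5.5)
Your proposal is correct and follows the paper's proof essentially step for step. Both use the quadratic Lyapunov function $\mathcal{L}$ built from $A^\top P+PA=-I_2$ at the sink $(S_*,N_*)$ and a small sublevel set $\Omega=\{\mathcal{L}<\rho\}$ lying inside the region where $H\geq 2\eta$ and $\nabla\mathcal{L}\cdot F\leq-\frac{1}{2}\|w\|^2$. Your explicit constant $b=\rho/(2\lambda_{\max}(P))$ simply replaces the paper's compactness argument for a positive lower bound on $\|w\|$ over $\partial\Omega$.

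One small imprecision: with only $C^1$ regularity of $F$, the remainder is $o(\|w\|^2)$ rather than $O(\|w\|^3)$. However, $o(\|w\|^2)$ is all your argument actually uses.
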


\begin{proof}
Since $H(S_*,N_*)>0$ and $H$ is continuous, there exists a neighborhood $U_H\subset \mathbb R^2$ of $(S_*,N_*)$ and a constant $\eta>0$ such that $H(S,N)\geq 2\eta$ for all $(S,N)\in U_H$. Next, consider the disease-free reduced system \eqref{eq:disease-free-reduced} which can be written as $\begin{bmatrix}
        S'\\
        N'
    \end{bmatrix} = F(S,N)$.
Note that the point $(S_*,N_*)$ is an equilibrium of this system. Its Jacobian matrix at $(S_*,N_*)$ is
\begin{align*}
     A= \begin{bmatrix}
        -(\zeta + p) & \zeta+\varepsilon q'(N_*)\\
        0 & \varepsilon q'(N_*)
    \end{bmatrix}
\end{align*}
whose eigenvalues are $-(\zeta + p) <0$ and $\varepsilon q'(N_*)<0$. Note that $q'(N_*)<0$ since $q''(N)<0$, $q(0)=q(N_*)=0$, and $q(N)>0$ for $0<N<N_*$. Since $A$ is Hurwitz, there exists a positive definite symmetric matrix $P$ satisfying $A^\top P+PA=-I_2$ where $I_2$ denotes the $2\times 2$ identity matrix (see, for instance, Theorem 4.6 in \cite{khalil2002nonlinear}). Define
\begin{align*}
      y= \begin{bmatrix}
        S-S_*\\
        N-N_*
    \end{bmatrix} \quad \text{and} \quad \mathcal{L}(S,N)=y^\top Py.
\end{align*}
Near $(S_*,N_*)$, the reduced disease-free system can be written as $y'=Ay+R(y)$ where $R(y)=o(\|y\|)$ as $y\to0$. Along solutions,
\begin{align*}
    \nabla \mathcal{L}(S,N)\cdot F(S,N)
    &= 2y^\top P y'\\
    &= 2y^\top P(Ay+R(y))\\
    &= y^\top(A^\top P+PA)y+2y^\top PR(y)\\
    &= -\|y\|^2+2y^\top PR(y).
\end{align*}
Since $R(y)=o(\|y\|)$, there exists a neighborhood $U_R\subset\mathbb R^2$ of $(S_*,N_*)$ such that $|2y^\top PR(y)| \leq \frac{1}{2}\|y\|^2$ for all $(S,N)\in U_R$. This yields 
\begin{align}
    \nabla \mathcal{L}(S,N)\cdot F(S,N) \leq -\frac{1}{2}\|y\|^2  \label{eq:nabla_l}
\end{align}
on $U_R$. Now set $U=U_H\cap U_R$. Choose $\varrho>0$ sufficiently small so that
\begin{align}
    \Omega=\{(S,N)\in\mathbb R^2:\mathcal{L}(S,N)<\varrho\} \label{eq:Omega}
\end{align}
satisfies $\overline{\Omega}\subset U$. This is possible because $P$ is positive definite, so the sublevel sets of $\mathcal{L}(S,N)=y^\top Py$ shrink to $(S_*,N_*)$ as $\varrho\to0$. Since $\overline{\Omega}\subset U_H$, we obtain $H(S,N)\geq 2\eta$ for any $(S,N)\in\overline{\Omega}$. This proves (a). Moreover,
\begin{align*}
    \partial\Omega = \{(S,N)\in\mathbb R^2:\mathcal{L}(S,N)=\varrho\},
\end{align*}
and so $y\neq0$ on $\partial\Omega$. Since $\partial\Omega$ is compact and $\|y\|$ is continuous, there exists $r>0$ such that $\|y\|\geq r$ for all $(S,N)\in\partial\Omega$. Setting $b = \frac{1}{2}r^2 >0$, we have from \eqref{eq:nabla_l} that
\begin{align*}
     \nabla \mathcal{L}(S,N)\cdot F(S,N)\leq -\frac{1}{2}\|y\|^2 \leq -\frac{1}{2} r^2 = -b
\end{align*}
for all $(S,N)\in\partial\Omega$. This proves (b). Finally, define 
\begin{align*}
    B(S,N) = \left| \nabla \mathcal{L}(S,N)\cdot Q(S,N) \right|.
\end{align*}
Note that $B$ is bounded on $\overline{\Omega}$ since $B$ is continuous. Therefore, there exists $\beta>0$ such that $B(S, N) \leq \beta$ for all $(S,N)\in\overline{\Omega}$. This proves (c), and the lemma follows.
\end{proof}

\begin{figure}[htbp]
    \centering
    \includegraphics[width=0.9\linewidth]{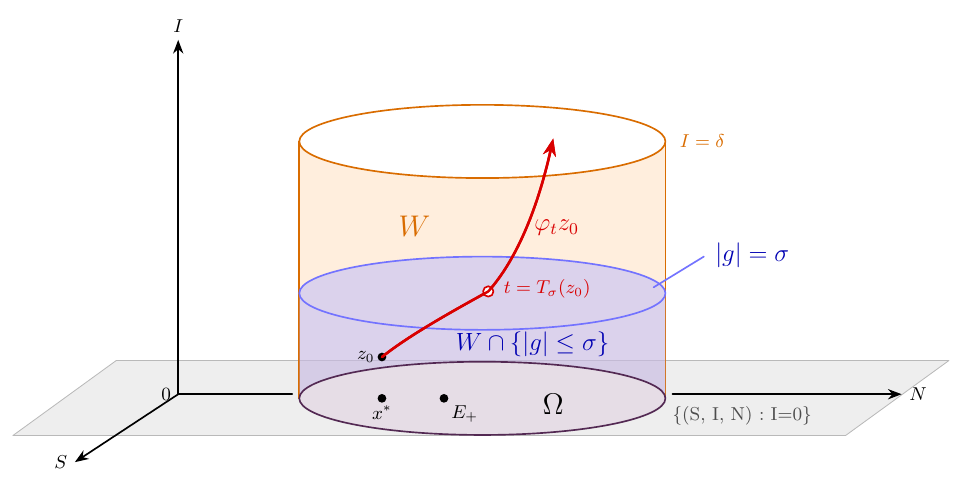}
    \caption{Illustration of the local construction used in the proof of Theorem~\ref{thm:Li-transient}. The orange cylinder represents the set $W=\Omega\times(0,\delta)$ above the disease-free plane $\{I=0\}$, while the blue region represents the $\sigma$-slow region $W\cap\{|g|\leq\sigma\}$. An initial point $z_0$ with $0<|g(z_0)|<\sigma$ remains in $W$ until the trajectory $\varphi_t z_0$ reaches the boundary $|g|=\sigma$ at the exit time $T_\sigma(z_0)$.}
    \label{fig:cylinder}
\end{figure}
    
\begin{theorem} \label{thm:Li-transient}
    There exists an open set $W\subset\mathbb R^3$ with $W\subset\{I>0\}$ and $E_+ \in \overline{W}$ such that every point in $\overline{W}\cap\{I=0\}$ is a $v$-transient center. 
\end{theorem}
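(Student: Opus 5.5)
The plan is to take $W=\Omega\times(0,\delta)$, with $\Omega$ from Lemma~\ref{lem:estimates} and $\delta>0$ small, and then apply Theorem~\ref{thm:alpha criterion} with observable speed
\begin{align*}
g=Dv=I'=H(S,N)I .
\end{align*}
Then $\overline W\cap\{I=0\}=\overline\Omega\times\{0\}$. We have $E_+\in\overline W$ and $W\subset\{I>0\}$. Every point of $\overline\Omega\times\{0\}$ lies in the invariant plane $\{I=0\}$, so $g\equiv 0$ along its orbit and these points belong to $\Xi$.

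The factorization needed for (A3) comes from
\begin{align*}
Dg=\nabla H\cdot(S',N')\,I+H\,(H I)=\Big(H+\tfrac{\nabla H\cdot(S',N')}{H}\Big)g,
\end{align*}
which is valid because $H\ge 2\eta$ on $\overline\Omega$. On $\overline\Omega\times[0,\delta]$ the vector field is bounded. We set $\alpha=H+\nabla H\cdot(S',N')/H$. Since $(S',N')=F+IQ$, the correction term is not small in general. To deal with this, I would use the variable $\Lambda$-free form $\frac{d}{dt}\log|g|=\alpha$. If this lower bound fails, I would instead apply Theorem~\ref{thm:comparison criterion} or Theorem~\ref{thm:auxiliary criterion}. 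For example, Theorem~\ref{thm:auxiliary criterion} would use $\Lambda=\log I$, which is bounded above by $\log\delta$ on $W$ and satisfies $D\Lambda=H\ge 2\eta$ on $W$. This avoids any need for $\alpha$ to be positive. Condition (C1) holds because points $(S,\epsilon,N)$ with $(S,N)\in\Omega$ near any boundary point lie in $W$ and have $|g|\le\sup H\cdot\epsilon\le S^*$.

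The main obstacle is the invariance hypothesis (C2). We must show that a trajectory starting in $W$ with $|g|\le S^*$ stays in $W$ until $|g|$ exceeds $S^*$. There are three ways it could leave.

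First, it cannot exit through $I=0$. The plane $\{I=0\}$ is invariant, so $I>0$ is preserved.

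Second, it cannot exit through $I=\delta$ before leaving the slow regime. For this I choose $S^*<2\eta\delta$. Then $|g|\le S^*$ with $H\ge 2\eta$ forces $I\le S^*/(2\eta)<\delta$.

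Third, it cannot exit through the lateral boundary $\partial\Omega\times(0,\delta)$. There Lemma~\ref{lem:estimates}(b),(c) give
\begin{align*}
\frac{d}{dt}\mathcal L=\nabla\mathcal L\cdot F+I\,\nabla\mathcal L\cdot Q\le -b+\beta I .
\end{align*}
While the trajectory is slow, $I\le S^*/(2\eta)$. So if $S^*<\min\{2\eta\delta,\,2\eta b/(2\beta)\}$, then $\frac{d}{dt}\mathcal L<0$ at $\mathcal L=\varrho$. This is the standard first-exit-time contradiction, run as in Example~\ref{example: planar}: take the first time $\tau<T_{S^*}$ at which the trajectory reaches $\partial W$, note that it must lie on the lateral part, and observe that $\mathcal L$ would have to increase to $\varrho$ there, which contradicts the strict negativity.

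With (C1)–(C3) verified, Theorem~\ref{thm:auxiliary criterion} shows that every point of $\overline\Omega\times\{0\}$ is a $v$-transient center. This covers all of $\overline W\cap\{I=0\}$, including the points on $\partial\Omega\times\{0\}$.

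As a fallback, Theorem~\ref{thm:structure}(e) together with $E_+$ gives the same conclusion. In either case the key step is choosing $S^*$ small relative to $\eta,\delta,b,\beta$.
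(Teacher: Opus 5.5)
Your final argument is essentially the paper's: $W=\Omega\times(0,\delta)$, Theorem~\ref{thm:auxiliary criterion} with $\Lambda=\log I$ and $D\Lambda=H\geq 2\eta$, and a first-exit-time argument ruling out the faces $I=0$, $I=\delta$ and $\partial\Omega$. The only difference is on the lateral face, where you make $\beta I$ small by shrinking $S^*$ instead of imposing $\delta<b/\beta$; both work because $I\leq S^*/(2\eta)$ at the exit time. Drop the closing ``fallback'', though: Theorem~\ref{thm:structure}(e) passes the property from a transient center $x^*$ to the closure of its orbit, and the orbit closure of the equilibrium $E_+$ is just $\{E_+\}$, so it cannot give the other points of $\overline\Omega\times\{0\}$.
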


\begin{proof}
    Before we proceed with the proof, we mention that the geometric content of the arguments below is illustrated in Figure~\ref{fig:cylinder}. For the observable $v(S, I, N) = I$, we have $g = Dv =I' = H(S, N) I$. It is easy to check that points on the disease-free plane $\{I=0\}$ belong to our set of candidate $v$-transient centers $\Xi$ defined in \eqref{eq:Xi}. Throughout the proof, we use the constants $\eta,b,\beta$, the open set $\Omega\subset\mathbb R^2$, and the function $\mathcal{L}$ constructed in Lemma~\ref{lem:estimates}. Choose $\delta >0$ and then choose $\sigma>0$ sufficiently small so that
    \begin{align}
        \frac{\sigma}{2\eta}<\delta < \frac{b}{\beta}. \label{eq:sigma_delta}
    \end{align}
     Define
    \begin{align}
        W = \{(S,I,N):(S,N)\in\Omega,~0<I<\delta \}. \label{eq:W}
    \end{align}
    Note that $W$ is open in $\mathbb R^3$, satisfies $W\subset\{I>0\}$, and $E_+\in\overline{W}$. We prove that every point in $\overline W\cap\{I=0\}$ is a $v$-transient center by applying Theorem~\ref{thm:auxiliary criterion}. In this application, the role of the abstract threshold $S^*>0$ in Theorem~\ref{thm:auxiliary criterion} is played by the constant $\sigma>0$.

    The function $\Lambda(S,I,N)=\log I$ is $C^1$ on $W$. Along solutions of the full system \eqref{eq:Li-model}, we have
    \begin{align*}
        D\Lambda(S,I,N) = \frac{I'}{I} = H(S,N).
    \end{align*}
    By Lemma~\ref{lem:estimates}(a), we have $H(S,N)\geq 2\eta$ for all $(S,N)\in\overline{\Omega}$. Hence, 
    \begin{align*}
        D\Lambda(S,I,N)\geq 2\eta \quad \forall~(S,I,N)\in W.
    \end{align*}

    Now let $z_0\in W$ satisfy $|g(z_0)|\leq \sigma$ and write
    \begin{align*}
        z(t)=\varphi_tz_0=(S(t),I(t),N(t)).
    \end{align*}
    We claim that $z(t)\in W$ for all $t \in [0, T_\sigma(z_0))$. Suppose, to the contrary, that there exists $t_1<T_\sigma(z_0)$ such that $z(t_1)\notin W$. Since $z_0\in W$ and $W$ is open, there is a first exit time
    \begin{align*}
        \tau=\inf\{t\in[0,t_1]:z(t)\notin W\}.
    \end{align*}
    Then $0<\tau\leq t_1<T_\sigma(z_0)$, $z(t) \in W$ for $0\leq t<\tau$, and $z(\tau) \in \partial W$. From the definition of $W$ in \eqref{eq:W}, the trajectory can exit $W$ only through one of the following boundary components: $I=0$, $I=\delta$, or $(S,N)\in\partial\Omega$. First, it is clear that it cannot exit through $I=0$. Next, we show that it also cannot exit through $I=\delta$. Since $\tau<T_\sigma(z_0)$, we have $|g(z(\tau))|\leq \sigma$. Suppose that $I(\tau)=\delta$. Because $(S(t), N(t)) \in \Omega$ for $t < \tau$, the limit $(S(\tau),N(\tau))$ as $t\to \tau^{-}$ must lie in the closure $\overline{\Omega}$ by continuity of solutions. Using Lemma~\ref{lem:estimates}(a), we have
    \begin{align*}
        |g(z(\tau))| =  H(S(\tau),N(\tau))I(\tau) \geq 2\eta\delta.
    \end{align*}
    But $\sigma$ was chosen so that $\frac{\sigma}{2\eta}<\delta$ as required in \eqref{eq:sigma_delta}. Hence, $|g(z(\tau))|>\sigma$ contradicting $\tau<T_\sigma(z_0)$. Thus, $I(\tau)\neq\delta$. It remains to rule out exit through $\partial\Omega$. Suppose $(S(\tau),N(\tau))\in\partial\Omega$. Using Lemma~\ref{lem:estimates}(a) and the fact that $\tau<T_\sigma(z_0)$, we get
    \begin{align*}
        2\eta I(\tau) \leq  H(S(\tau),N(\tau))I(\tau) =  |g(z(\tau))| \leq \sigma.
    \end{align*}
    This implies $I(\tau)\leq \frac{\sigma}{2\eta}<\delta$. Now, for the full system \eqref{eq:Li-model}, the $(S,N)$-equations can be written as
    \begin{align*}
        \begin{bmatrix}
        S'\\
        N'
    \end{bmatrix} = F(S,N)+I Q(S,N).
    \end{align*}
    Therefore,
    \begin{align*}
        \frac{d}{dt}\mathcal{L}(S(t),N(t))\bigg|_{t=\tau}
    &= \nabla\mathcal{L}(S(\tau),N(\tau))\cdot F(S(\tau),N(\tau))\\
    &\qquad \quad
    + I(\tau)\nabla\mathcal{L}(S(\tau),N(\tau))\cdot Q(S(\tau),N(\tau)).
    \end{align*}
    By Lemma~\ref{lem:estimates}(b)--(c) and the choice of $\delta$ from \eqref{eq:sigma_delta}, we obtain
    \begin{align*}
        \frac{d}{dt}\mathcal{L}(S(t),N(t))\bigg|_{t=\tau}
        \leq -b+\beta I(\tau)  < -b+\beta \delta <0.
    \end{align*}
    Note that $\mathcal{L}(S(t),N(t))<\varrho$ for $0\leq t<\tau$ while $\mathcal{L}(S(\tau),N(\tau))=\varrho$. 
    Therefore, $\mathcal{L}(S(t),N(t))$ reaches the boundary level $\varrho$ from below at time $\tau$. This is impossible if $\frac{d}{dt}\mathcal{L}(S(t),N(t))\bigg|_{t=\tau}<0$. Hence, the trajectory cannot exit through $\partial\Omega$. We have ruled out all possible exit boundaries. Therefore, no such first exit time $\tau<T_\sigma(z_0)$ exists and consequently, $z(t)\in W$ for all $t \in [0, T_\sigma(z_0))$. This verifies hypothesis (C2) of Theorem~\ref{thm:auxiliary criterion}.

    Now suppose that $z\in W$ and $|g(z)|\leq\sigma$. Recall that we have $I\leq \frac{\sigma}{2\eta}$ and so
    \begin{align*}
         \Lambda(z)=\log I \leq \log\left(\frac{\sigma}{2\eta}\right).
    \end{align*}
    Thus, condition (C3) in Theorem~\ref{thm:auxiliary criterion} holds with $c=2\eta$ and $m=\log\left(\frac{\sigma}{2\eta}\right)$. 

    It remains to verify condition (C1). Let $x^*\in\overline W\cap\{I=0\}$ and let $V$ be any neighborhood of $x^*$. Since $x^*\in\overline W$, there exists a sequence $\{z_n\}_{n \in \mathbb{N}} \subset W$, where $z_n=(S_n,I_n,N_n)$ and $z_n\to x^*$. Since $x^*\in\{I=0\}$, we have $I_n\to0$. Moreover, because $z_n\in W$, we have $(S_n,N_n)\in\Omega$ and $I_n>0$. By Lemma~\ref{lem:estimates}(a), $H(S_n,N_n)\geq 2\eta>0$. Hence, $g(z_n)=H(S_n,N_n)I_n>0$. Since $\overline{\Omega}$ is compact and $H$ is continuous on $\overline{\Omega}$, there exists $k>0$ such that $|H(S,N)|\leq k$ for all $(S,N)\in\overline{\Omega}$. Thus, $0<g(z_n)\leq k I_n\to0$. Therefore, for $n$ sufficiently large, $z_n\in V\cap W$ and $|g(z_n)|\leq \sigma$. This verifies hypothesis (C1).
    

    All hypotheses of Theorem~\ref{thm:auxiliary criterion} are therefore satisfied
    for every $x^*\in\overline W\cap\{I=0\}$. Hence, every point in $\overline W\cap\{I=0\}$ is a $v$-transient center.
\end{proof}

The preceding theorem gives a local result near the disease-free equilibrium $E_+$. In particular, it shows that all disease-free points whose $(S,N)$-coordinates lie in the closed base $\overline{\Omega}$ of $W$ are $v$-transient centers. We now extend this local conclusion to the biologically feasible disease-free region
\begin{align*}
    \mathcal D_0 = \{(S,0,N):0\leq S\leq N\leq N_*\} \subset \mathcal D
\end{align*}
by using the structural properties of transient centers. The key observation is that once a trajectory in $\mathcal D_0$ enters the local set $\overline W\cap\{I=0\}$, the property of being a $v$-transient center propagates along the trajectory.

\begin{theorem} \label{thm:Li-transientcenters}
    Every point in the set $\mathcal D_0$ is a $v$-transient center for system \eqref{eq:Li-model}.
\end{theorem}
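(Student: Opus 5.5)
The idea is to propagate the local conclusion of Theorem~\ref{thm:Li-transient} along disease-free trajectories. The propagation goes backward in time: if a point $x\in\mathcal D_0$ has forward orbit entering $\overline W\cap\{I=0\}$ at some time $t_0\geq 0$, we want to conclude that $x$ itself is a $v$-transient center. Theorem~\ref{thm:structure}(b) only propagates forward, but the plane $\{I=0\}$ is invariant and $g=H(S,N)I$ vanishes on it. Hence every point of $\mathcal D_0$ lies in $\Xi$, and $Dv(\varphi_{-t}y)=0$ for all $t>0$ whenever $y\in\{I=0\}$. So if $y=\varphi_{t_0}x\in\overline W\cap\{I=0\}$ is a $v$-transient center, Theorem~\ref{thm:structure}(c) shows that $\varphi_t y$ is a $v$-transient center for all $t\in\mathbb R$. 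Taking $t=-t_0$ gives that $x$ is one.

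It therefore suffices to show that every forward disease-free trajectory starting in $\mathcal D_0$ enters $\overline W\cap\{I=0\}=\overline\Omega\times\{0\}$ in finite time, using the $(S,N)$ coordinates. The reduced system \eqref{eq:disease-free-reduced} is decoupled in a triangular way. Since $N'=\varepsilon q(N)$ with $q>0$ on $(0,N_*)$ and $q(N_*)=0$, every $N(0)\in(0,N_*]$ gives $N(t)\to N_*$. Then $S$ solves a linear equation $S'=-(\zeta+p)S+\zeta N(t)+\varepsilon q(N(t))$, and its forcing converges to $\zeta N_*$. Hence $S(t)\to S_*$, for instance by the variation of constants formula.

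Consequently $(S(t),N(t))\to(S_*,N_*)$. Since $\Omega$ is an open neighborhood of $(S_*,N_*)$, the trajectory lies in $\Omega$ for all large $t$, which gives the required $t_0$.

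The main obstacle is the boundary case $N(0)=0$. The line $\{N=0\}$ is invariant, and there $S\to0$, so these trajectories converge to the disease-free equilibrium $(0,0,0)$ rather than to $E_+$. For these points I would argue separately, using the equilibrium theory of \cite{liu2023framework} or a direct application of Theorem~\ref{thm:finite-time criterion}.

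Concretely, near a point $(S_0,0,0)$ I would pick initial conditions with $N=\nu>0$ small and $I>0$ tiny. The $N$-component then moves from near $0$ up toward $N_*$ on the slow time scale, while $I$ stays tiny. The plan is to use Lemma~\ref{lem:transport} with $\psi=N$ and a level $\kappa$ chosen so that the trajectory enters $W$. Theorem~\ref{thm:auxiliary criterion} then forces escape.

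The issue is that $D\psi=\varepsilon q(N)-\mu I$ is not bounded below uniformly near $N=0$. I would handle this by a two-stage argument. In the first stage, the trajectory from $(S_0,\iota,\nu)$ follows the disease-free flow; by continuous dependence (H2) and $I$-invariance, it follows closely on any finite time interval the disease-free trajectory from $(S_0,0,\nu)$, which reaches the interior of $\Omega$. Choosing $\iota$ small enough, the perturbed trajectory is inside $W$ at that finite time with $|g|\leq\sigma$, unless it has already escaped. Theorem~\ref{thm:auxiliary criterion}'s argument then applies from that point.

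If this delicate case resists, I would fall back to proving that points with $N>0$ are transient centers and then invoking closedness. The set of $v$-transient centers should be closed when intersected with $\Xi$, via Theorem~\ref{thm:finite-time criterion} with a uniform $S^*=\sigma$: every neighborhood of a limit point contains a neighborhood of some nearby transient center, hence a point with $T_\sigma<\infty$. This closure argument seems the cleanest route to cover $N=0$. Because it needs the uniform threshold $\sigma$, which is available since every center produced above escapes $W$'s $\sigma$-slow region, I would adopt it as the main route.
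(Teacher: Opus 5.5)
Your proposal is correct. For points with $N_0>0$ it matches the paper: the disease-free flow converges to $(S_*,N_*)$, the orbit enters $\overline\Omega\times\{0\}$, and Theorem~\ref{thm:structure}(c) pulls the property back. The one difference there is that you prove the convergence from the triangular structure rather than citing \cite{li2016turning}.

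The genuine difference is the origin. Since $N=0$ forces $S=0$ in $\mathcal D_0$, the origin is the only boundary case, so your worry about general points $(S_0,0,0)$ is unnecessary. The paper uses the Stable Manifold Theorem to find $y_0\in\mathcal D_0$ with $N_0>0$ on the unstable manifold of $(0,0)$ for \eqref{eq:disease-free-reduced}. The origin then lies in the closure of the orbit of $y_0\in\Xi$, and Theorem~\ref{thm:structure}(e) applies. You instead approximate the origin by points such as $(0,0,N)$ with $N\downarrow 0$ and use closedness under a uniform threshold. This avoids invariant-manifold theory and gives the extra fact that the single threshold $\sigma$ works on all of $\mathcal D_0$. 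The paper's route is shorter, given the cited structural results.

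You should be precise about where $\sigma$ comes from. Theorem~\ref{thm:structure}(c) says nothing about thresholds, so the uniformity must come from your continuous-dependence argument. That argument runs as follows: if $\varphi_T x_0$ has $(S,N)$-part in $\Omega$, then every $z$ in a suitable neighborhood of $x_0$ with $I(z)>0$ either has $T_\sigma(z)\le T$, or satisfies $\varphi_T z\in W$ with $|g(\varphi_T z)|\le\sigma$ and hence $T_\sigma(z)<\infty$. This is the same argument the paper uses in proving Theorem~\ref{thm:Li-reachable-transient-centers}. Once you write it out, it proves the $N_0>0$ case directly through Theorem~\ref{thm:finite-time criterion} with $S^*=\sigma$, which makes the appeal to (c) redundant. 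With $\nu>0$ fixed and then $\iota>0$ small, it also handles the origin directly. So your two-stage argument and your closure argument are really the same proof.
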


\begin{proof}
    We first consider a point $x_0=(S_0,0,N_0)\in\mathcal D_0$ with $N_0>0$. Since the disease-free plane $\{I=0\}$ is invariant, the trajectory of $x_0$ remains in this plane. Writing $\varphi_t x_0=(S(t),0,N(t))$, the pair $(S(t),N(t))$ satisfies the disease-free reduced system \eqref{eq:disease-free-reduced} initialized at $(S_0, N_0)$.
    It was shown in \cite{li2016turning} that $(S(t),N(t))\to (S_*,N_*)$ as $t \to \infty$. Since $\Omega$ is a neighborhood of $(S_*,N_*)$, there exists $T\geq0$ such that $(S(T),N(T))\in\Omega$. Therefore, 
    \begin{align*}
        \varphi_T x_0=(S(T),0,N(T))\in \overline W\cap\{I=0\}.
    \end{align*}
    By the preceding theorem, $\varphi_T x_0$ is a $v$-transient center. Moreover, since $\{I=0\}$ is invariant and $Dv=g=H(S,N)I$, we have $Dv(\varphi_{-t}(\varphi_Tx_0))=0$ for all $t>0$. Therefore, by Theorem~\ref{thm:structure}(c), $\varphi_t(\varphi_Tx_0)$ is a $v$-transient center for all $t \in \mathbb{R}$. In particular, $x_0=\varphi_{-T}(\varphi_Tx_0)$ is a $v$-transient center.

    It remains to consider the origin. Let $W^u(0,0)$ denote the unstable manifold of the origin for the disease-free reduced system \eqref{eq:disease-free-reduced}. We regard this manifold as a subset of the full phase space through the embedding $(S,N)\mapsto (S,0,N)$. Choose any point
    \begin{align*}
        y_0=(S_0,0,N_0)\in W^u(0,0)\cap\mathcal D_0
    \end{align*}
    with $N_0>0$. The existence of such a point follows from the Stable Manifold Theorem (see Section 2.7 in \cite{perko2001differential} for a complete statement of the theorem). Indeed, the Jacobian of the disease-free reduced system at $(0,0)$ is
    \begin{align*}
     A_0= \begin{bmatrix}
        -(\zeta + p) & \zeta+\varepsilon q'(0)\\
        0 & \varepsilon q'(0)
    \end{bmatrix}.
    \end{align*}
    Since $q'(0)>0$, the origin is a hyperbolic saddle. Its unstable eigenvalue is $\lambda_u=\varepsilon q'(0)>0$ and the corresponding unstable eigenvector satisfies $S=\frac{\zeta +\lambda_u}{\zeta + p+\lambda_u}N$. Because $0<\frac{\zeta+\lambda_u}{\zeta + p+\lambda_u}<1$, this unstable direction points into the region $\{(S,N):0\leq S\leq N\leq N_*\}$. Since the unstable manifold is tangent to this eigendirection at the origin, a sufficiently small segment of its positive branch lies in the interior of $\{(S,N):0\leq S\leq N\leq N_*\}$. Hence, the positive branch of $W^u(0,0)$ contains points with $N>0$ inside $\mathcal D_0$. By the argument above, $y_0$ is a $v$-transient center and so $y_0 \in \Xi$. Moreover, since $y_0\in W^u(0,0)$, its backward orbit converges to the origin, that is, $\varphi_{-t}y_0\to (0,0,0)$ as $t \to \infty$. Therefore, $(0,0,0)\in \overline{\{\varphi_t y_0:t\in\mathbb R\}}$. Applying Theorem~\ref{thm:structure}(e), we conclude that the origin is also a $v$-transient center.

    Therefore, every point of $\mathcal D_0$ is a $v$-transient center.
\end{proof}

\subsection{Reachable \texorpdfstring{$v$}{v}-transient centers}

We now turn to reachable $v$-transient centers. To make the construction precise, let $\varphi_t^0$ denote the flow of the reduced disease-free system \eqref{eq:disease-free-reduced} on the $(S,N)$-plane. Recall that $(0,0)$ is a hyperbolic saddle for this reduced system. Let $W^u_+(0,0)$ denote the branch of its unstable manifold tangent at $(0,0)$ to the eigenvector with positive $S$ and $N$ components. Choose any point $y_\Gamma\in W^u_+(0,0)\cap\{(S,N):0<S<N<N_*\}$ and define $(S_\Gamma(t),N_\Gamma(t)) =\varphi_t^0(y_\Gamma)$ for $t \in \mathbb{R}$. The choice of $y_\Gamma$ only changes the parametrization by a time translation. We define the set
\begin{align}
     \Gamma_0 = \{(S_\Gamma(t),0,N_\Gamma(t)):t\in\mathbb R\}. \label{eq:Gamma_0}
\end{align}
Thus, $\Gamma_0$ is the disease-free heteroclinic orbit from $(0,0,0)$ to $E_+$. In particular, $(S_\Gamma(t),0,N_\Gamma(t))\to (0,0,0)$ as $t\to -\infty$ and $(S_\Gamma(t),0,N_\Gamma(t))\to E_+$ as $t\to \infty$. This orbit is illustrated in the $(S,N)$-plane in Figure~\ref{fig:transient}. Since $I=0$ on $\Gamma_0$, it is easy to see that $\Gamma_0\subset \Xi$. We show in what follows that the entire trajectory $\Gamma_0$ is composed of reachable $v$-transient centers.

\begin{figure}[htbp]
    \centering
    \includegraphics[width=0.7\linewidth]{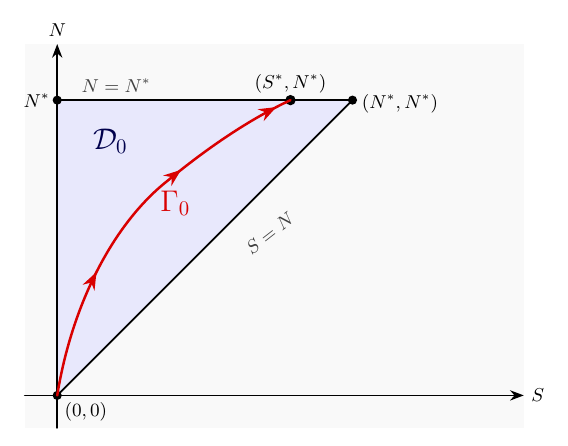}
    \caption{Schematic illustration of the biologically feasible disease-free set $\mathcal D_0$ and the heteroclinic orbit $\Gamma_0$ in the $(S,N)$-plane.}
    \label{fig:transient}
\end{figure}

Let us now consider the reversed system associated with model \eqref{eq:Li-model}. We denote its flow by $\widehat\varphi_t=\varphi_{-t}$. First, we show that points of $\Gamma_0$ are $v$-transient centers for this reversed flow. One could prove this statement by following the same strategy used in Theorems~\ref{thm:Li-transient} and~\ref{thm:Li-transientcenters}. Here, however, we present an alternative approach based on the theory of equilibria as $v$-transient centers. In particular, we use the following result of A. Liu et al. \cite{liu2023framework}, which transfers the property of being a $v$-transient center from a hyperbolic equilibrium to trajectories in $\Xi$ that converge to it. This gives the reader a complementary approach for identifying transient centers through the local structure of an equilibrium.

\begin{theorem}[Theorem 5.8 in \cite{liu2023framework}]\label{thm:Liu58}
Let $v\in C^2(\mathbb R^n,\mathbb R)$. Suppose that $0$ is a hyperbolic equilibrium of \eqref{eq:ode}, where $f\in C^1(\mathbb R^n,\mathbb R^n)$, and let $A=Df(0)$. Let $E^u$ denote the generalized unstable eigenspace of $A$. If $x^*\in\Xi$, $A^\top \nabla v(0)\not\perp E^u$, and $\varphi_tx^*\to 0$ as $t\to\infty$, then both $0$ and $x^*$ are $v$-transient centers.
\end{theorem}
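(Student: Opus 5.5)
The plan is to treat the equilibrium $0$ first and then transfer the escape property to $x^*$ along its forward orbit. In both steps I will use Theorem~\ref{thm:finite-time criterion}; it applies because $0\in\Xi$ trivially and $x^*\in\Xi$ by hypothesis. Write $g=Dv=\nabla v\cdot f$. Since $v\in C^2$, $g$ is $C^1$. Because $f(0)=0$,
\begin{align*}
\nabla g(0)=A^\top\nabla v(0)+D^2v(0)f(0)=A^\top\nabla v(0)=:w,
\end{align*}
so $g(x)=w\cdot x+o(\|x\|)$ near $0$. The hypothesis $w\not\perp E^u$ gives a vector $e\in E^u$ with $w\cdot e\neq 0$.

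\textbf{Step 1: $0$ is a transient center.} By the Stable Manifold Theorem, the local unstable manifold $W^u_{\mathrm{loc}}(0)$ is a $C^1$ graph $\{u+\theta(u):u\in E^u,\ \|u\|<r\}$ with $\theta(u)=o(\|u\|)$. Along the ray $u=\rho e$ this gives
\begin{align*}
g(\rho e+\theta(\rho e))=\rho\, w\cdot e+o(\rho).
\end{align*}
Hence there is $S^*>0$ such that for every small $s\in(0,S^*]$ there exists $q\in W^u_{\mathrm{loc}}(0)$ with $|g(q)|=2S^*>S^*$; in fact it suffices to fix one such $q$ for a suitably small $S^*$. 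Since $q\in W^u(0)$, we have $\varphi_{-t}q\to 0$ as $t\to\infty$. Therefore every neighborhood of $0$ contains $z=\varphi_{-t}q$ for some large $t$. For this $z$, $|g(\varphi_t z)|=|g(q)|>S^*$, so $T_{S^*}(z)\le t<\infty$. Theorem~\ref{thm:finite-time criterion} then shows that $0$ is a $v$-transient center.

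\textbf{Step 2: $x^*$ is a transient center.} Fix the same $q$ and $S^*$. By Hartman--Grobman, conjugate the flow on a ball $B$ around $0$ to the linear flow $e^{At}$. Use coordinates $(y_s,y_u)\in E^s\oplus E^u$ and write $h$ for the conjugating homeomorphism. Let $V$ be a neighborhood of $x^*$. Choose $t_0$ so large that $y^*=\varphi_{t_0}x^*\in B$. Since $\varphi_t x^*\to0$, the point $h(y^*)$ lies in the linear stable space and has unstable component $0$.

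Now write $h(q)=(q_s,q_u)$; after flowing $q$ backward slightly we may assume $q\in B$ and $q_s=0$. For large $T$ consider the point whose linear coordinates are
\begin{align*}
p_T=\bigl(h(y^*)_s,\ e^{-A_uT}q_u\bigr).
\end{align*}
Under the linear flow, $e^{AT}p_T=\bigl(e^{A_sT}h(y^*)_s,\ q_u\bigr)\to(0,q_u)=h(q)$ as $T\to\infty$. At the same time $p_T\to h(y^*)$, because $e^{-A_uT}q_u\to0$. The intermediate linear trajectory stays in $h(B)$: it is a sum of a decaying stable part and an unstable part growing toward $q_u$. Pulling back, the points $\xi_T=\varphi_{-t_0}\bigl(h^{-1}(p_T)\bigr)$ converge to $x^*$ by continuity of $h^{-1}$ and of $\varphi_{-t_0}$, so $\xi_T\in V$ for large $T$. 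Moreover $\varphi_{t_0+T}\xi_T\to q$. Since $g$ is continuous and $|g(q)|=2S^*$, we get $|g(\varphi_{t_0+T}\xi_T)|>S^*$ for large $T$, and hence $T_{S^*}(\xi_T)<\infty$. Theorem~\ref{thm:finite-time criterion} concludes the proof.

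\textbf{Main obstacle.} The delicate step is Step 2: making sure the perturbed orbit actually arrives near the chosen fast point $q$ rather than exiting $B$ elsewhere where $g$ might be small. I would handle this through the explicit linearized construction above, which is a $\lambda$-lemma-type argument. The nondegeneracy $A^\top\nabla v(0)\not\perp E^u$ is used exactly once: to guarantee that $g$ is not identically $o(\|x\|)$ on $W^u_{\mathrm{loc}}(0)$, so that a point $q$ in the $S^*$-fast regime exists there for a fixed $S^*$.
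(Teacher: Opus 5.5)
The paper does not prove this statement. It quotes it from \cite{liu2023framework} and only applies it to the reversed epidemic system, so there is no in-text argument to compare your route against. Judged on its own terms, your proof is sound in outline, with no missing idea, but two steps as written need repair. Step~1 is fine: $\nabla g(0)=A^\top\nabla v(0)$ and the tangency of $W^u_{\mathrm{loc}}(0)$ to $E^u$ give a point $q$ on the local unstable manifold with $g(q)\neq 0$, and the backward orbit of $q$ supplies escaping points in every neighborhood of $0$. In Step~2 you make the right structural choice: the Hartman--Grobman map $h$ is used only to steer orbits, via continuity of $h^{-1}$, and $g$ is evaluated only at the genuine point $q$. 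This matters because a homeomorphism does not transport $Dv$.

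\textbf{Containment of the linear segment.} You argue that $e^{At}p_T$, $0\le t\le T$, stays in $h(B)$ because it is ``a decaying stable part plus an unstable part growing toward $q_u$.'' This fails in the Euclidean norm when $A$ is non-normal: $\|e^{A_st}y_s\|$ and $\|e^{-A_u\tau}q_u\|$ can grow transiently. One fix is to use adapted norms on $E^s$ and $E^u$ (e.g.\ from Lyapunov equations) in which $e^{A_st}$ and $e^{-A_ut}$ are contractions for $t\ge 0$. Take a box $\{\|y_s\|<r,\ \|y_u\|<r\}\subset h(B)$, and choose $t_0$ so large that the whole forward orbit of $y^*$ lies in $h^{-1}$ of this box. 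You need the whole forward orbit in $B$ anyway to conclude that $h(y^*)$ has zero unstable component. The segment then stays in the box. The local conjugacy extends along segments that stay in $h(B)$, either by an open--closed argument in $t$ or via the global conjugacy of a cut-off vector field. This justifies $\varphi_T(h^{-1}(p_T))=h^{-1}(e^{AT}p_T)$, which you use implicitly.

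\textbf{Order of choices for $q$ and $S^*$.} You fix $q$ with $|g(q)|=2S^*$ in Step~1 before $B$ exists, then ``flow $q$ backward slightly'' into $B$. That changes $g(q)$, so $|g(q)|=2S^*$ is lost. Reaching $B$ may also require a long backward time, during which $g$ can vanish. For instance, if $A_u$ has complex eigenvalues, the backward orbit spirals across $\{w\cdot u=0\}$. Instead, fix $B$ and the box first. Then take $q=\rho e+\theta(\rho e)$ with $\rho$ small enough that $q$ lies in a local unstable manifold contained in $B$ and $h(q)$ lies in the box. Backward invariance then gives $h(q)\in E^u$ automatically, $g(q)\neq 0$ still holds, and you set $S^*=|g(q)|/2$ for Step~2. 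Theorem~\ref{thm:finite-time criterion} is applied separately to $0$ and to $x^*$, so the two thresholds need not coincide.
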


We apply Theorem~\ref{thm:Liu58} to the reversed system. In this setting, convergence to the origin in forward time for the reversed flow $\widehat \varphi_t$ is equivalent to convergence to the origin in backward time for the original flow $\varphi_t$. This is precisely the behavior of points on $\Gamma_0$ since $\Gamma_0$ is the disease-free heteroclinic orbit that approaches the origin as $t\to-\infty$ and approaches $E_+$ as $t\to \infty$.

\begin{theorem}
    Every point of $\Gamma_0$ is a $v$-transient center for the reversed system associated with model \eqref{eq:Li-model}.
\end{theorem}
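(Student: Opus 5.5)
We apply Theorem~\ref{thm:Liu58} to the reversed system $x'=-f(x)$, with $f$ the right-hand side of \eqref{eq:Li-model} and observable $v(S,I,N)=I$. This $v$ is linear, hence in $C^2$. The equilibrium is the origin $(0,0,0)$. We need three facts: the origin is hyperbolic for the reversed system; every $x^*\in\Gamma_0$ lies in the candidate set $\widehat\Xi$ of the reversed system and satisfies $\widehat\varphi_t x^*=\varphi_{-t}x^*\to 0$; and the transversality condition $\widehat A^\top\nabla v(0)\not\perp \widehat E^u$ holds. Here $\widehat A=-Df(0)$ and $\widehat E^u$ is its generalized unstable eigenspace.

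\textbf{Step 1: the candidate set and convergence.} For the reversed flow, $\widehat{Dv}=-g=-H(S,N)I$. This vanishes on the invariant plane $\{I=0\}$, so $\Gamma_0\subset\widehat\Xi$. Convergence $\widehat\varphi_t x^*\to0$ as $t\to\infty$ is the statement that $\Gamma_0$ emanates from the origin in backward time, which holds by construction via $W^u_+(0,0)$.

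\textbf{Step 2: the linearization at the origin.} We order the coordinates $(S,I,N)$. Since $h(0,N)=0$, the terms involving $h$ contribute nothing to $Df(0)$ except through $h(0,0)I$, which is zero. We therefore get
\begin{align*}
Df(0)=\begin{bmatrix} -(\zeta+p) & 0 & \zeta+\varepsilon q'(0)\\ 0 & -a & 0\\ 0 & -\mu & \varepsilon q'(0)\end{bmatrix}.
\end{align*}
The $S$-row has a $-h(0,0)=0$ entry in the $I$ column. The eigenvalues are $-(\zeta+p)$, $-a$ and $\varepsilon q'(0)$, all nonzero, so the origin is hyperbolic. For $\widehat A=-Df(0)$ the eigenvalues are $\zeta+p>0$, $a>0$ and $-\varepsilon q'(0)<0$. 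Thus $\widehat E^u$ is two-dimensional, spanned by the eigenvectors of $-Df(0)$ for $\zeta+p$ and $a$.

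\textbf{Step 3: transversality.} We have $\nabla v(0)=e_2$, so $\widehat A^\top e_2$ is minus the second row of $Df(0)$, namely $(0,a,0)^\top$. It then suffices to exhibit one vector in $\widehat E^u$ with nonzero $I$-component. The eigenvector for eigenvalue $-a$ of $Df(0)$ is found from $-\mu I+\varepsilon q'(0)N=-aN$, which gives $N=\mu I/(\varepsilon q'(0)+a)$, and then $S$ is obtained from the first row. This requires $a\neq\zeta+p$ and $a+\varepsilon q'(0)\neq0$; the latter holds since both terms are positive. If $a=\zeta+p$, we work instead with the generalized eigenspace for that eigenvalue. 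It is the span of the $S$ and $I$ directions adjusted in $N$, and it still contains a vector with nonzero $I$-component, because the $I$-direction is not in the $S$-$N$ invariant subspace.

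In fact, a simpler route is available: $\widehat E^u$ plus the $N$-stable line spans $\mathbb R^3$, and the stable line of $\widehat A$ (the eigenvector for $\varepsilon q'(0)$ of $Df(0)$) lies in $\{I=0\}$. Hence $\widehat E^u\not\subset\{I=0\}$, so $(0,a,0)^\top\not\perp\widehat E^u$. Theorem~\ref{thm:Liu58} then yields that every $x^*\in\Gamma_0$, as well as the origin, is a $v$-transient center of the reversed system.

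The main obstacle is this transversality check, in particular handling the possible resonance $a=\zeta+p$ cleanly. The complementary-subspace argument just given avoids computing eigenvectors explicitly.
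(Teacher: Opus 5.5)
Your proposal is correct and follows the paper's proof essentially step for step. You apply Theorem~\ref{thm:Liu58} to the reversed system at $E_0=(0,0,0)$, compute $\widehat A=-Df(0)$ with eigenvalues $\zeta+p$, $a$, $-\varepsilon q'(0)$, note $\widehat A^\top\nabla v(E_0)=(0,a,0)^\top$, and use $\widehat E^u\not\subset\{I=0\}$ together with $\varphi_{-t}x^*\to E_0$. Your complementary-subspace argument justifies the non-containment $\widehat E^u\not\subset\{I=0\}$, which the paper only asserts: the stable eigenvector of $\widehat A$ lies in $\{I=0\}$, so a two-dimensional $\widehat E^u$ inside that plane could not complement it in $\mathbb R^3$. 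That argument also handles the resonance $a=\zeta+p$, so your explicit eigenvector computation can be dropped.
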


\begin{proof}
    Let $E_0 =(0,0,0)$. We apply Theorem~\ref{thm:Liu58} to the reversed system at the equilibrium $E_0$. The linearization of system \eqref{eq:Li-model} around $E_0$ is given by 
    \begin{align*}
        A= \begin{bmatrix}
            -(\zeta + p) & 0 & \zeta+\varepsilon q'(0)\\
                0 & -a & 0\\
                0 & -\mu & \varepsilon q'(0)
    \end{bmatrix}. 
    \end{align*}
    Hence, the linearization of the reversed system at $E_0$ is $\widehat A = -A$, whose eigenvalues are $\zeta + p>0$, $a>0$, and $-\varepsilon q'(0)<0$. If $\widehat E^u$ is the generalized unstable eigenspace of $\widehat A$, then $\widehat E^u \not\subset \{I=0\}$, and so $\widehat E^u$ contains a vector $w$ with a nonzero $I$-component, $w_I$. As a consequence, we have
    \begin{align*}
         \widehat A^\top \nabla v(E_0)\cdot w = (0,a,0) \cdot w = a w_I \neq 0.
    \end{align*}
    Therefore, $\widehat A^\top \nabla v(E_0) \not\perp \widehat E^u$. Now let $x^*\in \Gamma_0$. By definition of $\Gamma_0$, $\widehat\varphi_t x^* = \varphi_{-t}x^* \to E_0$ as $t \to \infty$. It is now immediate from Theorem~\ref{thm:Liu58} that $x^*$ is a $v$-transient center for the reversed system. 
\end{proof}

We are now ready to prove the reachability of points in $\Gamma_0$.

\begin{theorem} \label{thm:Li-reachable-transient-centers}
    Every point of $\Gamma_0$ is a reachable $v$-transient center for system \eqref{eq:Li-model}.
\end{theorem}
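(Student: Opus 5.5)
By Theorem~\ref{thm:reachable finite-time criterion}, and because $\Gamma_0\subset\Xi$, it suffices to fix $x^*\in\Gamma_0$ and find $S^*>0$ with the following property: every neighborhood $U$ of $x^*$ contains a single point $\xi$ with $T_{S^*}(\xi)<\infty$ and $\widehat T_{S^*}(\xi)<\infty$. By part (c) of the basic properties of reachable centers, reachability propagates along orbits. So I would first reduce to one convenient point of $\Gamma_0$ and then transport the conclusion. The plan is to exploit that $g=H(S,N)I$ changes sign along $\Gamma_0$. Near $E_+$ we have $H>0$, and the forward escape comes from the cylinder argument of Theorem~\ref{thm:Li-transient}. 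Near $E_0$ we have $H(0,0)=h(0,0)-a=-a<0$, so in backward time $I$ grows like $e^{at}$, which gives the backward escape.

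\textbf{Step 1 (backward escape).} Near $E_0$ I would build the analogue of Lemma~\ref{lem:estimates} for the reversed system. Take a small open set $\Omega_0\ni(0,0)$ on which $H\le -2\eta_0$, and apply Theorem~\ref{thm:auxiliary criterion} to $\widehat\varphi_t$ with $\Lambda=\log I$, using $\widehat D\Lambda=-H\ge 2\eta_0$. The difficulty is that $(0,0)$ is a saddle of the reduced flow, not a sink, so no Lyapunov sublevel set is backward invariant. Instead I would use Lemma~\ref{lem:transport} in reversed time. A point close to $\Gamma_0$ with small $I>0$ follows $\Gamma_0$ backward; this is continuous dependence on a compact time interval, since $\varphi_{-t}x^*\to E_0$. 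While slow, $I$ cannot blow up, and $\log I$ increases at rate at least $2\eta_0$ whenever $H<0$. The cleaner route is to avoid needing invariance in a neighborhood of $E_0$ altogether. Along the backward orbit, the trajectory either becomes fast, which is the desired outcome, or stays $\sigma$-slow. In the slow case $I\le \sigma/|H|$ stays small wherever $|H|$ is bounded away from zero. Near the saddle the trajectory then leaves $\Omega_0$ along the stable direction of the reduced flow, the $S$ direction, and that direction exits $\mathcal D$. In the full system, however, $S\ge0$ and $N\ge S$ are preserved only forward in time. Backward in time the trajectory can leave $\mathcal D$, where $h$ may behave arbitrarily. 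This is where I expect trouble, so I would choose the initial points inside the invariant set $\{I>0\}$ and track them only while $\log I$ is growing.

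\textbf{Step 2 (same point, both directions).} Pick $\xi$ with $(S,N)$ exactly on $\Gamma_0$ and $I=\epsilon$ tiny. In forward time, continuous dependence brings $\varphi_{T}\xi$ into the cylinder $W$ of Theorem~\ref{thm:Li-transient}, either with $I$ still small or after already becoming fast. Condition (C2) then forces $T_\sigma<\infty$. Here one must guarantee that $I$ stays below $\delta$ during the passage, including near the turning point where $H$ changes sign. Since $|g|\le\sigma$ only bounds $I$ when $|H|$ is bounded below, I would bound $I$ on the compact passage directly by $\epsilon e^{Ct}$, with $\epsilon$ small relative to the transit time, exactly as in Example~\ref{example: transport}. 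The backward direction is handled symmetrically by continuous dependence toward $E_0$, followed by the backward-growth estimate of Step 1. Taking $S^*$ to be the minimum of the two thresholds gives both escapes for the same $\xi$.

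\textbf{Main obstacle.} The backward estimate near the saddle $E_0$ is the crux. I would handle it by using the eigendirection of $\widehat A$ with eigenvalue $a$, which has a nonzero $I$-component. In a small box around $E_0$, the linearized reversed dynamics make $I$ grow at rate about $a$ while the other coordinates change at rates $\zeta+p$ and $-\varepsilon q'(0)$. Since $\varepsilon q'(0)$ is small, the box can be made invariant up to the fast time, except possibly through its $S$ faces. Those faces I would rule out by an inward-pointing estimate of the kind used for the boundary terms in Theorem~\ref{thm:Li-transient}. This would verify hypotheses (C1)--(C3) of Theorem~\ref{thm:auxiliary criterion} for the reversed flow at a point of $\Gamma_0$ near $E_0$. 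Propagating along $\Gamma_0$ by the orbit property of reachable centers then completes the proof.
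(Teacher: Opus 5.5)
Your forward argument works, if roundabout. The backward half has a genuine gap, exactly at the step you call the crux. For the reversed flow, $\widehat A e_S=(\zeta+p)e_S$ with $e_S=(1,0,0)$, so $S$ is the \emph{expanding} direction at $E_0$. Consider the points $(\pm s_1,I,0)$ on the faces $S=\pm s_1$ of any box around $E_0$. There the reversed field $S'=(\zeta+p)S-\zeta N-\varepsilon q(N)+h(S,N)I$ points strictly outward, because $h(s_1,0)\geq 0\geq h(-s_1,0)$ and $I\geq 0$. So no inward-pointing estimate exists on these faces, and slow trajectories can leave through them (you observed this yourself in Step~1). Hence (C2) is not verified for the reversed flow, and Theorem~\ref{thm:auxiliary criterion} cannot be invoked.

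Nor can the backward direction be handled ``symmetrically'' for a prescribed point. Write $\Gamma_0$ as a graph $S=s_\Gamma(N)$. In reversed time the deviation $u=S-s_\Gamma(N)$ satisfies, exactly, $u'=(\zeta+p)u+\bigl(h(S,N)-\mu s_\Gamma'(N)\bigr)I$. Take your $\xi$ (so $u=0$, $I=\epsilon>0$) based at a point like $x^\dagger$ of the numerical example. There the forcing coefficient is about $0.94$, while $I$ grows only at rate $a-h\approx 0.007$. The reversed trajectory is therefore thrown to large $S$, where $H>0$ and $I$ decays. Consequently $\sup_t|g|\to 0$ as $\epsilon\to 0$, and $\widehat T_{\widehat\sigma}(\xi)=\infty$. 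Near $E_0$ the sign of $h-\mu s_\Gamma'$ reverses, so your point may work there. But that would need a side-of-$\Gamma_0$ argument your proposal never makes.

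The idea you are missing is the decoupling used in the paper. First, forward escape holds for \emph{every} $z$ in a neighborhood $U$ of $x^*$ with $I_0\neq 0$:
\begin{itemize}
\item Fix $T$ with $(S,N)(\varphi_T x^*)\in\Omega$, and take $U$ by continuous dependence.
\item If $z$ is still $\sigma$-slow at time $T$, then $H\geq 2\eta$ on $\Omega$ forces $0<|I(\varphi_T z)|\leq \sigma/(2\eta)<\delta$.
\item So $\varphi_T z$ lies in the two-sided cylinder $W^\pm=\{(S,I,N):(S,N)\in\Omega,\ 0<|I|<\delta\}$ and escapes.
\end{itemize}
No control of $I$ through the turning point is needed, so your $\epsilon e^{Ct}$ bookkeeping is superfluous, and the sign of $I_0$ is irrelevant.

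Second, backward escape is not built by hand. By the theorem immediately preceding this one, $x^*$ is a $v$-transient center of the reversed flow; that theorem applies Theorem~\ref{thm:Liu58} at $E_0$, using $\widehat A^\top\nabla v(E_0)=(0,a,0)\not\perp \widehat E^u$. So Theorem~\ref{thm:finite-time criterion} supplies, in every neighborhood $V_0\subset V\cap U$, a point $z$ with $\widehat T_{\widehat\sigma}(z)<\infty$. Invariance of $\{I=0\}$ forces $I_0\neq 0$, hence also $T_\sigma(z)<\infty$. Theorem~\ref{thm:reachable finite-time criterion} with threshold $\min\{\sigma,\widehat\sigma\}$ then finishes. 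The common point is thus delivered by the backward criterion, which is exactly the choice your Step~2 tries, and fails, to make directly.
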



\begin{proof}
First, recall that in the proof of Theorem~\ref{thm:Li-transient}, we constructed an open set $W=\{(S,I,N):(S,N)\in\Omega,~0<I<\delta\}$ near the equilibrium $E_+$, together with a threshold $\sigma>0$ such that every point $z_0\in W$ satisfying $|g(z_0)|\leq \sigma$ leaves the $\sigma$-slow region in finite forward time. Note that the same argument applies, with a minor modification, to the two-sided set
\begin{align*}
     W^\pm=\{(S,I,N):(S,N)\in\Omega,~0<|I|<\delta\}.
\end{align*}
Indeed, $\Lambda(S,I,N)=\log |I|$ is $C^1$ on $W^\pm$, and along solutions
\begin{align*}
    D\Lambda(S,I,N) = \frac{I'}{I} = H(S,N).
\end{align*}
Thus, $D\Lambda\geq 2\eta$ on $W^\pm$. Moreover, we have $H\geq 2\eta>0$ on $\Omega$, and hence $|g(S,I,N)|=H(S,N)|I|$. Therefore, whenever $z\in W^\pm$ satisfies $|g(z)|\leq\sigma$, we have $|I|\leq \frac{\sigma}{2\eta}<\delta$. The only change in the invariance argument is that, on the boundary $(S,N)\in\partial\Omega$, one estimates
\begin{align*}
      I~\nabla\mathcal{L}(S,N)\cdot Q(S,N)
    \leq |I|~|\nabla\mathcal{L}(S,N)\cdot Q(S,N)|
    \leq \beta |I|.
\end{align*}
Consequently,
\begin{align*}
    \frac{d}{dt}\mathcal{L}(S(t),N(t))\bigg|_{t=\tau}
    \leq -b+\beta |I(\tau)| < -b+\beta\delta <0.
\end{align*}
Thus, trajectories cannot leave $W^\pm$ through $\partial\Omega$ before leaving the $\sigma$-slow region. The remaining boundary components, $I=0$ and $|I|=\delta$, are ruled out as before. Hence, every point $z_0\in W^\pm$ satisfying $|g(z_0)|\leq\sigma$ leaves the $\sigma$-slow region in finite forward time, that is, $T_\sigma(z_0)<\infty$.


Let $x^* \in \Gamma_0$. Since $\varphi_t x^* \to E_+$ as $t \to \infty$, we may choose $T>0$ such that $\varphi_T x^*=(S(T),0,N(T))$ satisfies $(S(T),N(T))\in\Omega$. Note that the map $z\mapsto (S(\varphi_T z),N(\varphi_T z))$ is continuous by continuous dependence of solutions on initial data. Since $\Omega$ is open, there exists a neighborhood $U$ of $x^*$ such that $(S(\varphi_T z),N(\varphi_T z))\in\Omega$ for all $z \in U$. Let $z=(S_0,I_0,N_0)\in U$ with $I_0\neq0$. If $z$ exits the $\sigma$-slow region before time $T$, then $T_\sigma(z)<\infty$. Otherwise, we have $|g(\varphi_t z)|\leq \sigma$ for all $t \in [0, T]$. In particular, $|g(\varphi_T z)|\leq \sigma$. Since $(S(\varphi_T z),N(\varphi_T z))\in\Omega$ and $H\geq 2\eta$ on $\Omega$, we get
\begin{align*}
    0<|I(\varphi_Tz)|=\frac{|g(\varphi_Tz)|}{H(S(\varphi_Tz),N(\varphi_Tz))} \leq \frac{\sigma}{2\eta} <\delta.
\end{align*}
Thus, $\varphi_T z\in W^\pm$ and we know that $T_\sigma(\varphi_T z) < \infty$. Therefore, we still have $T_\sigma(z) < \infty$.

Additionally, we know from the previous theorem that $x^*$ is a $v$-transient center for the reversed system associated with \eqref{eq:Li-model}. Hence, by Theorem~\ref{thm:finite-time criterion} applied to the reversed system, there exists $\widehat\sigma>0$ such that every neighborhood of $x^*$ contains a point $z$ satisfying $\widehat T_{\widehat\sigma}(z) < \infty$. 

Now let $\sigma_{\min}=\min\{\sigma,\widehat\sigma\}$ and suppose that $V$ is an arbitrary neighborhood of $x^*$. Since $g(x^*)=0$ and $g$ is continuous, there is a neighborhood $V_0$ of $x^*$ such that $V_0\subset V\cap U$ and $|g(z)|<\sigma_{\min}$ for all $z \in V_0$. Applying our argument from the previous paragraph to the neighborhood $V_0$, we can pick $z = (S_0, I_0, N_0) \in V_0$ such that $\widehat T_{\widehat\sigma}(z)<\infty$. It is easy to check that $I_0 \neq 0$. Indeed, if $I_0=0$, then the invariance of $\{I=0\}$ yields $I(\widehat\varphi_t z)=0$ for all $t \geq 0$ and hence $g(\widehat\varphi_t z)=0$ for all $t \geq 0$. This would imply $\widehat T_{\widehat\sigma}(z)=\infty$, a contradiction. Thus, $I_0\neq0$. Since $z \in U$ and $I_0 \neq 0$, we have $T_\sigma (z) < \infty$ as well. Therefore, $T_{\sigma_{\min}}(z) \leq T_\sigma (z) < \infty$ and $\widehat T_{\sigma_{\min}}(z) \leq\widehat T_{\widehat\sigma}(z)<\infty$. Thus, every neighborhood $V$ of $x^*$ contains a point $z$ whose forward and reversed exit times from the $\sigma_{\min}$-slow regime are both finite. We now conclude from Theorem~\ref{thm:reachable finite-time criterion} that $x^*$ is a reachable $v$-transient center. 
\end{proof}

The preceding result shows that every point of $\Gamma_0$ is a reachable $v$-transient center. It is natural to ask whether the converse is true, namely, whether $\Gamma_0$ exhausts all reachable $v$-transient centers contained in the set $\mathcal D_0$. We conjecture that for system \eqref{eq:Li-model}, the reachable $v$-transient centers in $\mathcal D_0$ are precisely the points of $\Gamma_0$.

Proving this converse appears to be more delicate than proving reachability of points on $\Gamma_0$. The argument above uses the special two-sided geometry of $\Gamma_0$. In forward time, its points approach $E_+$, where $H(S,N)>0$ gives a forward escape mechanism, while in backward time they approach the origin, where the reversed flow argument gives a backward escape mechanism. For a point of $\mathcal D_0\setminus \Gamma_0$, one would need to rule out the existence of nearby perturbations that realize both finite forward and finite backward escape. This is a nonlocal question since it depends not only on the local behavior near the point, but also on the global behavior of its forward and backward disease-free orbits and on how nearby trajectories interact with the boundary of the feasible region and with the hypersurface $H(S,N)=0$. We therefore leave the characterization of all reachable $v$-transient centers as a direction for future work.

\subsection{Numerical example}


Before concluding this section, we present a numerical illustration of the reachability of a nonequilibrium point on $\Gamma_0$ with respect to the observable $v=I$. We use the functional forms considered by Li et al. \cite{li2016turning},
\begin{align*}
    q(N)=N\left(1-\frac{N}{N_*}\right),
    \quad
    h(S,N)=\frac{\beta S}{K+S},
\end{align*}
together with the parameter values from Case 1 of their numerical simulations, summarized in Table~\ref{tab:parameter-values}. 
\begin{table}[t]
    \centering
    \begin{tabular}{|c|c|c|c|}
        \hline
        \multicolumn{1}{|c|}{Parameter}
        & \multicolumn{1}{c|}{Value}
        & \multicolumn{1}{c|}{Parameter}
        & \multicolumn{1}{c|}{Value} \\
        \hline
        $\zeta$ & $0.2$
        & $p$ & $0.01$ \\
        \hline
        $\mu$ & $0.048$
        & $\beta$ & $1$ \\
        \hline
        $\gamma$ & $0.75$
        & $K$ & $0.1$ \\
        \hline
        $N_*$ & $400$
        & $\varepsilon$ & $10^{-4}$ \\
        \hline
    \end{tabular}
    \caption{Parameter values used in the numerical illustration, taken from \cite{li2016turning}.}
    \label{tab:parameter-values}
\end{table}
For these choices, Li et al. \cite{li2016turning} numerically observed that the solution starting from 
\begin{align}
    z_0 =(S_0, I_0, N_0)=(35,0.09555,67) \label{eq:initialcondition}
\end{align}
approaches a stable relaxation oscillation.

We first obtain a numerical approximation of $\Gamma_0$ by integrating the reduced system \eqref{eq:disease-free-reduced} from a point close to the origin on the positive branch of its unstable manifold. We then integrate the full system \eqref{eq:Li-model} forward from the initial condition $z_0$ defined in \eqref{eq:initialcondition} and write
\begin{align*}
     z(t)=\varphi_tz_0=(S(t),I(t),N(t)).
\end{align*}
Over the chosen search interval, we identify the time $t_0$ at which the Euclidean distance from $z(t)$ to the numerical approximation of $\Gamma_0$ is minimized. We denote by $x^\dagger=(S^\dagger,0,N^\dagger)\in\Gamma_0$ a corresponding closest point on the numerical approximation of $\Gamma_0$. For the simulation considered here, we have
\begin{align*}
    t_0 &\approx7.1149\times10^5, \\
     z(t_0) &\approx \left(10.48647613,\, 1.51103919\times10^{-8},\, 11.01054519 \right), \\
      x^\dagger &\approx \left( 10.48526627,\, 0,\, 11.00927479 \right).
\end{align*}
Consequently,
\begin{align*}
     \left\|z(t_0)-x^\dagger\right\|\approx1.75\times10^{-3},
\end{align*}
indicating that the trajectory starting from $z_0$ passes close to the nonequilibrium point $x^\dagger\in\Gamma_0$.

Figure~\ref{fig:trajectory} displays the infectious population on a time interval surrounding the passage near $x^\dagger$. The blue curve represents the evolution leading to the passage near $x^\dagger$, while the red curve represents the subsequent evolution. The black marker and vertical dotted line indicate the selected time $t_0$. The figure shows that a trajectory initialized away from $\Gamma_0$ can approach a nonequilibrium point of $\Gamma_0$, enter a regime in which the infectious population is very small, and subsequently leave this regime as the next epidemic outbreak develops. 

\begin{figure}[htbp]
    \centering
    \includegraphics[width=0.8\linewidth, height=6cm]{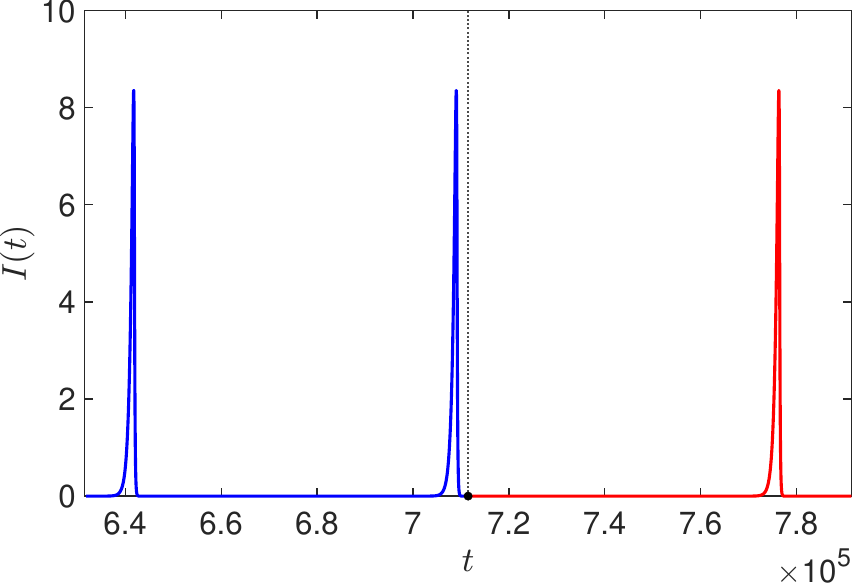}
    \caption{Evolution of the infectious population along a trajectory that reaches a neighborhood of a nonequilibrium reachable $v$-transient center $x^\dagger\in\Gamma_0$.}
    \label{fig:trajectory}
\end{figure}

\section{Conclusion}\label{sec:conclusion}

In this paper, we developed a framework for identifying nonequilibrium transient centers, namely, candidate points that lie on nonstationary trajectories along which the relevant observable remains constant. Such points describe situations in which the observable is constant along a nonstationary trajectory and may remain nearly constant for a long time on nearby trajectories, even though the full state of the system continues to evolve. This distinction is important in applications where slow behavior depends on the observable and need not arise from proximity to an equilibrium.

We established several finite-time escape criteria based on the evolution of $g=Dv$, $g^2$, or an auxiliary scalar function along nearby trajectories. These results identify mechanisms that force a trajectory to remain slow for a finite period before eventually entering a prescribed fast regime. We also introduced a transport argument for situations in which the relevant growth estimates become available only after the trajectory has first reached another region of phase space. The same ideas were extended to reachable transient centers by requiring forward and backward escape to occur along the same nearby trajectory.

Lastly, we applied the theory to an epidemic model with delay of stability loss. Taking the infectious population as the observable, we showed that every point in the biologically feasible disease-free set is a $v$-transient center and identified a heteroclinic orbit consisting of reachable $v$-transient centers. In this setting, the prolonged evolution near the disease-free manifold represents a slowly varying interepidemic phase, while departure from that manifold corresponds to the onset of rapid epidemic growth. These results show that nonequilibrium transient centers provide a useful framework for describing long transients associated with nonstationary trajectories and observable-dependent slow behavior.


\section*{Disclosure statement}
The authors report that there are no competing interests to declare.


\bibliographystyle{siamplain}
\bibliography{references}
\end{document}